\documentclass[a4paper]{amsart}
\usepackage{graphicx}
\usepackage{amssymb,amscd} 
\usepackage{stmaryrd}
\usepackage{dsfont}
\usepackage[mathcal]{euscript}
\usepackage{tikz}
\usepackage{tikz-cd}
\tikzset{labl/.style={anchor=south, rotate=270, inner sep=.5mm}}
\usepackage{hyperref}
\hypersetup{%
  bookmarksnumbered=true,%
  colorlinks=true,%
  linkcolor=blue,%
  citecolor=blue,%
  filecolor=blue,%
  menucolor=blue,%
  urlcolor=blue,%
  bookmarksopen=true,%
  bookmarksdepth=2,%
  pageanchor=true}

\title{Central support for triangulated categories}

\author{Henning Krause}
\address{Fakult\"at f\"ur Mathematik\\
Universit\"at Bielefeld\\ D-33501 Bielefeld\\ Germany}
\email{hkrause@math.uni-bielefeld.de}


\theoremstyle{plain}
\newtheorem{thm}{Theorem}[section]
\newtheorem*{thm1}{Theorem~1}
\newtheorem*{thm2}{Theorem~2}
\newtheorem{prop}[thm]{Proposition}
\newtheorem{lem}[thm]{Lemma} 

\newtheorem{cor}[thm]{Corollary}

\theoremstyle{definition}
\newtheorem{defn}[thm]{Definition}
\newtheorem{exm}[thm]{Example}

\theoremstyle{remark}
\newtheorem{rem}[thm]{Remark}

\numberwithin{equation}{section}


\hyphenation{Grothen-dieck} 
\hyphenation{Auslan-der} 
\hyphenation{com-mu-ta-tive}
\hyphenation{uni-serial}
\hyphenation{ubi-qui-tous}


\newcommand{\Ann}{\operatorname{Ann}}

\newcommand{\cha}{\operatorname{char}}

\newcommand{\Coh}{\operatorname{Coh}}
\newcommand{\coh}{\operatorname{coh}}

\newcommand{\colim}{\operatorname*{colim}}

\newcommand{\End}{\operatorname{End}}

\newcommand{\Hom}{\operatorname{Hom}}

\newcommand{\id}{\operatorname{id}}

\newcommand{\Ker}{\operatorname{Ker}}

\renewcommand{\mod}{\operatorname{mod}}
\newcommand{\Mod}{\operatorname{Mod}}

\newcommand{\Perf}{\operatorname{Perf}}

\newcommand{\Pt}{\operatorname{Pt}}

\newcommand{\Spec}{\operatorname{Spec}}
\newcommand{\Spc}{\operatorname{Spc}}

\newcommand{\stmod}{\operatorname{stmod}}

\newcommand{\supp}{\operatorname{supp}}

\newcommand{\thick}{\operatorname{thick}}
\newcommand{\Thick}{\operatorname{Thick}}


\newcommand{\Ab}{\mathrm{Ab}}

\newcommand{\cent}{\mathrm{cent}}

\newcommand{\op}{\mathrm{op}}


\newcommand{\col}{\colon}

\newcommand{\iso}{\xrightarrow{\raisebox{-.4ex}[0ex][0ex]{$\scriptstyle{\sim}$}}}
\newcommand{\kos}[2]{{#1}/\!\!/{#2}} 
\newcommand{\leftiso}{\xleftarrow{\raisebox{-.4ex}[0ex][0ex]{$\scriptstyle{\sim}$}}}

\newcommand{\longiso}{\xrightarrow{\ \raisebox{-.4ex}[0ex][0ex]{$\scriptstyle{\sim}$}\ }}

\newcommand{\lto}{\longrightarrow}
\newcommand{\smatrix}[1]{\left[\begin{smallmatrix}#1\end{smallmatrix}\right]}

\newcommand{\xto}{\xrightarrow}
\newcommand*{\intref}[2]{\def\tmp{#1}\ifx\tmp\empty\hyperref[#2]{\ref*{#2}}\else\hyperref[#2]{#1~\ref*{#2}}\fi}



\def\calO{\mathcal O}
\def\P{\mathcal P}
\def\calS{\mathcal S} 
\def\T{\mathcal T} 
\def\U{\mathcal U}
\def\V{\mathcal V}
\def\W{\mathcal W}
\def\X{\mathcal X}

\def\bfD{\mathbf D}

\def\bbP{\mathbb P}

\def\bbZ{\mathbb Z}

\newcommand{\fra}{\mathfrak{a}} 
\newcommand{\frb}{\mathfrak{b}}

\newcommand{\frp}{\mathfrak{p}}

\def\a{\alpha}
\def\b{\beta}
\def\e{\varepsilon}
\def\d{\delta}
\def\g{\gamma}

\def\Ga{\varGamma}

\def\Si{\Sigma}

\def\one{\mathds 1}

\begin{document}

\keywords{Centre, distributive lattice, Mayer--Vietoris sequence,
  support, thick subcategory, triangulated category}

\subjclass[2020]{18G80}

\begin{abstract}
  For any essentially small triangulated category the centre of its
  lattice of thick subcategories is introduced; it is a spatial frame
  and yields a notion of central support. A relative version of this
  centre recovers the support theory for tensor triangulated
  categories and provides a universal notion of cohomological
  support. Along the way we establish Mayer--Vietoris sequences for
  pairs of commuting subcategories.
\end{abstract}

\date{January 25, 2023}

\maketitle

\section{Introduction}

Triangulated categories are algebraic desiderata as they were
introduced in order to deal with cohomologies arising in geometric or
topological settings. But over the last years triangulated categories
have been turned into geometric objects, thanks to the notion of
support \cite{Ba2005,BIK2008,BKS2007,KP2017,NVY2019}. In this work we pursue
this direction\footnote{We follow the slogan `geometry is algebra is
  geometry', which is an adaptation of Ringel's `algebra is geometry
  is algebra' in \cite{Ri1999}.} and study in which way the geometry
of an essentially small triangulated category $\T$ is captured by its
lattice of thick subcategories, and more precisely by its centre. Our
first result (Corollary~\ref{co:MV}) is meant to illustrate this
because many applications of geometric flavour involve Mayer--Vietoris
sequences \cite{BF2007, BIK2008,Ri1997,Th1997}.

\begin{thm1}
  For  a pair of thick subcategories $\U,\V\subseteq\T$ the following
  are equivalent.
  \begin{enumerate}
\item The pair $\U,\V$ is \emph{commuting}, that is,  any morphism between
objects of $\U$ and $\V$ factors through an object of
$\U\wedge\V$. 
\item For each pair of objects $X,Y\in\T$ there exists a long exact sequence  
\begin{multline*}\cdots\to \Hom_{\T/(\U\wedge\V)}(X,Y)\to  \Hom_{\T/\U}(X,Y)\oplus
 \Hom_{\T/\V}(X,Y)\to\\
 \to\Hom_{\T/(\U\vee\V)}(X,Y)\to\Hom_{\T/(\U\wedge\V)}(X,\Si
 Y)\to\cdots.
\end{multline*}
\end{enumerate}
\end{thm1}

The notion of support for objects of a triangulated category $\T$ amounts
to additional geometric structure. It requires a topological space and
assigns to an object of $\T$ a subset of this space. For instance, the
space could be given by the set of prime tensor ideals provided that
$\T$ admits a symmetric tensor product \cite{Ba2005}, or by the set of
prime ideals of a graded commutative ring acting centrally on $\T$
\cite{BIK2008}.

An essential feature of any notion of support is a parameterisation of
thick subcategories by subsets of a topological space. We
take as input a lattice $T$ of thick subcategories. Then its \emph{centre}
\[Z(T):=\{\U\in T\mid \U \text{ and }\V\text{ commute for all } \V\in T\}\]
provides such a  space because $Z(T)$ is a \emph{spatial frame};
this means it identifies with the lattice of open sets of a
topological space. This space may be used to define a support for any
object such that the central subcategories are parameterised by the
open subsets.

We summarise and state our main result (Theorem~\ref{th:main}) for the
lattice of thick subcategories $\Thick\T$ of an essentially small
triangulated category $\T$.

\begin{thm2}
  Let $T\subseteq \Thick\T$ be a sublattice which is closed under
  arbitrary joins. Then its centre $Z(T)\subseteq T$ is also a
  sublattice and closed under arbitrary joins. Moreover, $Z(T)$ is a
  spatial frame and for any pair $\U,\V$ in $T$ there is a
  Mayer--Vietoris sequence provided at least one of $\U$ and $\V$ is
  central.
\end{thm2}

A consequence is that $T$ is a distributive lattice when $Z(T) = T$,
but the converse is not necessarily true. For example, $Z(T)=T$ for
$T$ the lattice of thick tensor ideals when $\T$ is a rigid tensor
triangulated category (Example~\ref{ex:rigid-tt}). Thus we provide
foundations for \emph{triangular geometry} in the spirit of tensor
triangular geometry \cite{Ba2011,KP2017,NVY2019}, with additional
structure on $\T$ given by a distinguished sublattice
$T\subseteq\Thick\T$.

Our discussion of central thick subcategories is
inspired by recent work of Gratz and Stevenson \cite{GS2022}, and in
particular its presentation at the Abel Symposium 2022 in
\r{A}lesund. A crucial observation in \cite{GS2022} is the relevance
of \emph{distributivity} for the lattice of thick subcategories. In
this work we provide evidence for a close connection between
distributivity and \emph{commutativity}, responding in particular to
the quest for `interesting distributive sublattices of $\Thick\T$' in
\cite[\S8]{GS2022}.

This paper is organised as follows. In \S\ref{se:cohfun} we briefly
discuss our main tool: the category of cohomological functors of a
triangulated category which identifies with its ind-completion.  In
\S\ref{se:centre} we introduce central subcategories and show that
they form a spatial frame. Along the way we establish a couple of
Mayer--Vietoris sequences for any pair of commuting subcategories.
In \S\ref{se:examples} we provide many
examples, including triangulated categories with additional
structure given by a tensor product or a central ring action.  This
motivates a relative version of the lattice of central subcategories
which is discussed in the final \S\ref{se:relative}.

\subsection*{Acknowledgements}
It is a pleasure to thank Greg Stevenson for inspiring discussions
related to this work. Much thanks also to Dave Benson for providing
specific examples from modular representation theory, and to Kent
Vashaw for help with the non-commutative tensor products in this paper.

Part of this work was done during the Trimester Program `Spectral
Methods in Algebra, Geometry, and Topology' at the Hausdorff Institute
in Bonn. It is a pleasure to thank for hospitality and for funding
by the Deutsche Forschungsgemeinschaft (DFG) under Excellence
Strategy EXC-2047/1-390685813.

\section{The category of cohomological functors}\label{se:cohfun}

Triangulated categories were introduced in order to deal with
cohomologies arising in geometric or topological settings. If
we wish to treat triangulated categories as geometric objects in their
own right, it seems natural to study them by cohomological methods,
using the category of cohomological functors.  In this section we
collect some basic and well-known facts.

\subsection*{Cohomological functors}

Let $\T$ be an essentially small triangulated category with
suspension $\Si\colon\T\iso\T$.  Recall that a functor
$\T^\op\to\Ab$ into the category of abelian groups is
\emph{cohomological} if it takes exact triangles to exact sequences.
We denote by $\Coh\T$ the category of cohomological
functors. Morphisms in $\Coh\T$ are natural transformation and the
Yoneda functor $\T\to\Coh\T$ sending $X\in\T$ to
\[H_X=\Hom_\T(-,X)\] is fully faithful.  The suspension $\Si$ extends
to a functor $\Coh\T\iso\Coh\T$ by taking $F$ in $\Coh\T$
to $F\circ\Si^{-1}$; we denote this again by $\Si$.  

It is convenient to view $\Coh\T$ as a full subcategory of the
category $\Mod\T$ of all additive functors $\T^\op\to\Ab$.  Note
that (co)limits in $\Mod\T$ are computed pointwise.  For $E$ and $F$
in $\Mod\T$ we write $\Hom(E,F)$ for the set of morphisms from $E$
to $F$.  Thus $\Hom(H_X,F)\cong F(X)$ for $X$ in $\T$, by Yoneda's
lemma.

Any additive functor $F\colon\T^\op\to\Ab$ can be written
canonically as a colimit of representable functors
\begin{equation*}\label{eq:slice}
\colim_{H_X\to F} H_X \iso F
\end{equation*}
where the colimit is taken over the slice category $\T/F$; see
\cite[Proposition~3.4]{Grothendieck/Verdier:1972a}. Objects in
$\T/F$ are morphisms $ H_X\to F$ where $X$ runs through the objects
of $\T$. A morphism in $\T/F$ from $H_X\xto{\phi} F$ to
$H_{X'}\xto{\phi'} F$ is a morphism $\alpha \colon X\to X'$ in $\T$
such that $\phi' H_{\alpha}=\phi$.

A theorem of Lazard says that a module is flat if and only if it is a
filtered colimit of finitely generated free modules; this has been
generalised to functor categories by Oberst and R\"ohrl.  The
following lemma shows that cohomological and flat functors agree; this
is well-known, for instance from \cite[Lemma~2.1]{Neeman:1992a}.

\begin{lem}
\label{le:flat}
The cohomological functors $\T^\op\to\Ab$ are precisely the filtered
colimits of representable functors (in the category of additive
functors $\T^\op\to\Ab$).  In particular, the category $\Coh\T$
has filtered colimits.\qed
\end{lem}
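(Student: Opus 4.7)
The plan is to establish the two directions of the equivalence separately, leveraging the canonical colimit presentation $F\cong\colim_{H_X\to F}H_X$ that has just been recalled.

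For the easy direction, that a filtered colimit of representable functors is cohomological, I would argue as follows. Each representable $H_X=\Hom_\T(-,X)$ sends an exact triangle in $\T$ to an exact sequence of abelian groups, by the defining long exact sequence of $\Hom$ in a triangulated category. Since filtered colimits in $\Ab$ are exact, and since colimits in $\Mod\T$ are computed pointwise, a filtered colimit of representables sends an exact triangle to a filtered colimit of exact sequences, hence to an exact sequence. Thus it is cohomological.

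For the nontrivial direction, given a cohomological $F$, I would prove that the slice category $\T/F$ is filtered; combined with the canonical presentation $F\cong\colim_{H_X\to F}H_X$ this exhibits $F$ as a filtered colimit of representables. Non-emptiness is witnessed by the zero object. Any two objects $H_X\xto{\phi}F$ and $H_{X'}\xto{\phi'}F$ admit a common cocone via $H_{X\oplus X'}\cong H_X\oplus H_{X'}\xto{(\phi,\phi')}F$. The interesting part is coequalising two parallel maps $\alpha,\beta\col X\to X'$ with $\phi'H_\alpha=\phi'H_\beta=\phi$: completing $\alpha-\beta$ to a triangle $X\xto{\alpha-\beta}X'\xto{\gamma}X''\to\Si X$ and applying $F$ yields an exact sequence $F(X'')\to F(X')\to F(X)$, and the hypothesis says precisely that the image of $\phi'\in F(X')$ (under Yoneda) vanishes in $F(X)$. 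Hence $\phi'$ lifts to some $\phi''\in F(X'')$, giving a morphism $\gamma$ in $\T/F$ that equalises $\alpha$ and $\beta$. I expect this coequaliser step to be the main (and really the only) point where cohomologicality is used, so it is the heart of the argument.

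For the \textquotedblleft in particular\textquotedblright\ clause, I would observe that $\Coh\T$ is closed under filtered colimits inside $\Mod\T$: since filtered colimits in $\Mod\T$ are pointwise and filtered colimits of exact sequences of abelian groups are exact, a filtered colimit of cohomological functors is again cohomological. Together with the existence of filtered colimits in $\Mod\T$, this yields the desired closure statement.
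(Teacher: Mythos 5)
Your argument is correct and is precisely the standard proof that the paper omits (the lemma is stated with a \verb|\qed| and a pointer to \cite{Neeman:1992a}): the essential point is, as you identify, that cohomologicality is exactly what makes the slice category $\T/F$ filtered, via the coequalising step using the triangle on $\alpha-\beta$, and the converse plus the closure of $\Coh\T$ under filtered colimits both reduce to exactness of filtered colimits in $\Ab$ computed pointwise. Nothing to add.
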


 \subsection*{Exact functors}

We say that a sequence of morphisms in $\Coh\T$ is \emph{exact}
provided that evaluation at each object in $\T$ yields an exact
sequence in $\Ab$.  This provides an exact structure in the sense of
Quillen on the category $\Coh\T$.

Let $\T$ and $\U$ be
essentially small triangulated categories. A functor
$P\colon\Coh\T\to\Coh\U$ is said to be \emph{exact} if it takes
exact sequences to exact sequences and if there is a natural
isomorphism $P\circ \Si\iso\Si\circ P$.

An exact functor $f\colon\T\to\U$ induces a pair of functors
\[
f^*\colon\Coh\T\lto\Coh\U\qquad\text{and}\qquad
f_*\colon\Coh\U\lto\Coh\T
\] 
where $f^*(F)=\colim_{H_X\to F} H_{f(X)}$ and $f_*(G)=G\circ f$. We
recall some basic facts.

\begin{lem}[{\cite[Lemma~2.4]{BIK2015}}]
\label{le:exact-fun}
\pushQED{\qed}
Let $f\colon\T\to\U$ be an exact functor between essentially small
triangulated categories.
\begin{enumerate} 
\item The functor $f^*$ is a left adjoint of $f_*$.
\item The functors $f^*$ and $f_*$ are exact and preserve filtered
  colimits.
\item If $f$ is fully faithful, then $f^*$ is fully faithful and
  $\id\iso f_*\circ f^*$.
\item If $f$ is a quotient functor, then $f_*$ is fully faithful and
  $f^*\circ f_*\iso\id$.\qedhere
\end{enumerate} 
\end{lem}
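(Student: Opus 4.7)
Part (1) is essentially forced by the construction. Using the canonical presentation $F=\colim_{H_X\to F}H_X$ over the slice $\T/F$, together with the definition $(f_*G)(X)=G(f(X))$, one computes
\[\Hom(f^*F,G)=\Hom\bigl(\colim H_{f(X)},G\bigr)=\lim G(f(X))=\lim (f_*G)(X)=\Hom(F,f_*G)\]
by two applications of Yoneda; naturality in $F$ and $G$ is straightforward, giving the adjunction.

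For part (2), preservation of arbitrary colimits by $f^*$ is automatic from (1). The functor $f_*$ is precomposition with $f$, and since both pointwise exactness and filtered colimits in $\Coh\T$ and $\Coh\U$ are computed in $\Mod$ (Lemma~\ref{le:flat}), $f_*$ preserves exactness and filtered colimits as well. The one genuinely nontrivial piece is exactness of $f^*$, and this is the main technical obstacle. My approach would be to reduce to the representable case: every short exact sequence in $\Coh\T$ is a filtered colimit of short exact sequences of representables arising from triangles in $\T$ (using the identification of $\Coh\T$ with the ind-completion carrying its inherited pure exact structure, together with the fact that each cohomological functor has a presentation by representables coming from a triangle). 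Since $f$ is an exact functor of triangulated categories, it sends triangles to triangles, so $f^*$ sends each such representable exact sequence to a representable exact sequence in $\Coh\U$; combined with preservation of filtered colimits by $f^*$ and preservation of pointwise exactness by filtered colimits in $\Mod$, exactness of $f^*$ follows.

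For parts (3) and (4) the strategy is uniform: verify the unit or counit of the adjunction is an isomorphism on representables and extend by filtered colimits, using (2). For (3), $f^*$ is fully faithful iff the unit $\id\to f_*f^*$ is an isomorphism; on $H_X$ this reads
\[(f_*f^*H_X)(Y)=\Hom_\U(f(Y),f(X))\iso\Hom_\T(Y,X)=H_X(Y),\]
which is precisely full faithfulness of $f$. Since every $F\in\Coh\T$ is a filtered colimit of representables and both $f^*$, $f_*$ preserve filtered colimits, the unit is an isomorphism everywhere. Part (4) is dual: $f_*$ is fully faithful iff the counit $f^*f_*\to\id$ is an isomorphism, and it suffices to check this on representables $H_Y\in\Coh\U$. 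Here $f^*f_*H_Y=\colim_{H_X\to f_*H_Y}H_{f(X)}$, and the slice $\T/f_*H_Y$ is the category of morphisms $f(X)\to Y$ in $\U$; when $f$ is a Verdier quotient functor, a calculus-of-fractions cofinality argument identifies this colimit with $H_Y$, after which filtered-colimit extension completes the proof.
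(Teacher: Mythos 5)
The paper gives no proof of this lemma (it is quoted from \cite[Lemma~2.4]{BIK2015} with a \qed), so your argument has to stand on its own. Parts (1), (3) and (4) are fine: the Kan-extension/Yoneda computation of the adjunction, the verification of the unit on representables followed by filtered-colimit extension, and the cofinality argument for the counit of a quotient functor are all correct (you should also record the isomorphisms $f^*\circ\Si\cong\Si\circ f^*$ and $f_*\circ\Si\cong\Si\circ f_*$, coming from $f\Si\cong\Si f$ and $H_X\circ\Si^{-1}=H_{\Si X}$, since the paper's notion of exact functor on $\Coh$ includes compatibility with suspension). The genuine gap is the reduction you use for exactness of $f^*$. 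A short exact sequence in $\Coh\T$ all of whose terms are representable is automatically \emph{split}: a pointwise epimorphism $H_B\to H_C$ is $H_g$ for some $g\colon B\to C$ by Yoneda, and surjectivity of $\Hom(C,B)\to\Hom(C,C)$ produces a section. So the ``short exact sequences of representables arising from triangles'' over which you want to take filtered colimits are only the split ones, and the appeal to $f$ preserving triangles does no work there. Conversely, the sequences of representables that genuinely arise from a non-split triangle $X\to Y\to Z\to\Si X$ are the unbounded long exact sequences $\cdots\to H_X\to H_Y\to H_Z\to H_{\Si X}\to\cdots$, which are not short exact, and a general short exact sequence in $\Coh\T$ is not a filtered colimit of such sequences. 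As stated, the decomposition you invoke does not exist.

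The step is fixable, and the fix is close to your parenthetical remark about purity: it is Lazard/Oberst--R\"ohrl, not the triangulated structure of $f$, that carries the argument. Given a pointwise exact sequence $0\to F'\to F\xrightarrow{p} F''\to 0$ in $\Coh\T$, write $F''=\colim_i H_{Z_i}$ over the slice category $\T/F''$ and pull back $p$ along each $H_{Z_i}\to F''$. Each resulting pointwise epimorphism $E_i\to H_{Z_i}$ splits by Yoneda, so $E_i\cong F'\oplus H_{Z_i}$, and since filtered colimits commute with pullbacks in $\Mod\T$ the original sequence is the filtered colimit of the split sequences $0\to F'\to F'\oplus H_{Z_i}\to H_{Z_i}\to 0$. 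Now $f^*$ preserves split exact sequences and filtered colimits, and a filtered colimit of pointwise exact sequences is pointwise exact, so $f^*$ preserves short exact sequences. For arbitrary pointwise exact sequences, note that the kernel of a pointwise epimorphism between cohomological functors is again cohomological (apply the long exact homology sequence to the short exact sequence of complexes obtained by evaluating on a triangle), so the complex breaks into short exact sequences in $\Coh\T$ and the previous case applies. With this replacement your proof is complete.
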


Let $\calS\subseteq\T$ be a triangulated subcategory. With $i\col
\calS\to \T$ and $q\col \T\to \T/\calS$ the canonical functors,
set
\begin{equation*}
\label{eq:loc-fun}
\Ga_\calS =i^*\circ i_*\qquad\text{and}\qquad L_\calS=q_*\circ q^*.
\end{equation*} 
These are exact functors on $\Coh \T$. We may identify $\Coh\calS$ and $\Coh\T/\calS$ with full subcategories
of $\Coh\T$. Then we have for each $F\in\Coh\T$
\[F\in\Coh\calS\;\;\iff\;\;\Ga_\calS F\iso F\;\;\iff\;\; L_\calS F=0\]
and
\[F\in\Coh\T/\calS\;\;\iff\;\; F\iso L_\calS F \;\;\iff\;\;\Ga_\calS F=0.\]

The functors $\Ga_\calS$ and $L_\calS$ fit into a long exact
sequence. To explain this we note that for any $F\in \Coh\T$ one has
\begin{equation*}
\label{eq:gam-F}
\Ga_\calS F =  \colim_{H_S\to F} \Hom_{\T}(-,S)
\end{equation*}
where the colimit is taken over the slice category $\calS/F$, which is
filtered because $\calS$ is a triangulated subcategory. On the other
hand,  the
definition of the morphisms in the quotient category $\T/\calS$ gives
for any $X\in\T$
\[
L_\calS H_X=\Hom_{\T/\calS}(-,X)=\colim_{X\to Y}\Hom_\T(-,Y)
\]
where $X\to Y$ runs through all morphisms with cone in $\calS$.
Combining this we obtain the following.

\begin{prop}[{\cite[Proposition~2.10]{BIK2015}}]
\label{pr:loc-seq}
\pushQED{\qed} For a triangulated subcategory $\calS\subseteq\T$ there
is the following long exact sequence of functors $\Coh\T\to\Coh\T$.
\begin{equation}\label{eq:loc-seq}
 \cdots \lto\Si^{n-1} L_\calS\lto \Si^n\Ga_\calS\lto \Si^n\lto \Si^nL_\calS \lto \Si^{n+1}\Ga_\calS\lto\cdots\qedhere
\end{equation}
\end{prop}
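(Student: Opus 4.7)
The plan is to build the long exact sequence in three stages. First, identify the natural transformations at the core of \eqref{eq:loc-seq}: the counit of $i^*\dashv i_*$ yields $\Ga_\calS\to\id$, the unit of $q^*\dashv q_*$ yields $\id\to L_\calS$, and one must construct a connecting morphism $L_\calS\to\Si\Ga_\calS$. Second, prove the asserted exact sequence pointwise on representable functors $H_X$ by completing morphisms from $\calS$ into $X$ to exact triangles and passing to filtered colimits. Third, extend to an arbitrary cohomological functor $F$ using Lemma~\ref{le:flat}, which writes $F$ as a filtered colimit of representables, together with the fact that $\Ga_\calS$ and $L_\calS$ preserve filtered colimits (Lemma~\ref{le:exact-fun}).

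For the representable case, fix $X\in\T$. Each morphism $\alpha\col S\to X$ with $S\in\calS$ extends to an exact triangle $S\xto{\alpha}X\to Y_\alpha\to\Si S$ in $\T$, so Yoneda yields an exact sequence of representable functors
\[
\cdots\to H_S\to H_X\to H_{Y_\alpha}\to\Si H_S\to\Si H_X\to\cdots.
\]
Since the cone of $X\to Y_\alpha$ is $\Si S\in\calS$, the object $Y_\alpha$ indexes a term of $L_\calS H_X=\colim_{X\to Y}H_Y$, where the colimit is over morphisms with cone in $\calS$. The assignment $\alpha\mapsto(X\to Y_\alpha)$ defines a functor from $\calS/H_X$ into this index category which is cofinal, because any $X\to Y$ with cone $Z\in\calS$ arises from the rotated triangle with $S=\Si^{-1}Z\in\calS$. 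As $\calS$ is triangulated, $\calS/H_X$ is filtered, and taking the filtered colimit of the triangle-induced exact sequences produces \eqref{eq:loc-seq} at $F=H_X$, using the identification $\Ga_\calS H_X=\colim_{\calS/H_X}H_S$ and exactness of filtered colimits in $\Ab$.

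For arbitrary $F\in\Coh\T$, Lemma~\ref{le:flat} gives $F\iso\colim_i H_{X_i}$ along a filtered diagram, and by Lemma~\ref{le:exact-fun} both $\Ga_\calS$ and $L_\calS$ commute with such colimits; hence the sequence at $F$ is obtained as the filtered colimit of the sequences at each $H_{X_i}$, and filtered colimits of exact sequences in $\Ab$ remain exact. The main obstacle I expect is the cofinality argument pairing $\calS/H_X$ with the index category computing $L_\calS H_X$, together with checking that the connecting morphism $L_\calS\to\Si\Ga_\calS$ is well-defined and natural, independent of the choices of triangle completions: one must verify compatibility under the morphisms in $\calS/H_X$ so that the connecting maps are coherent and descend to a single natural transformation on the colimit.
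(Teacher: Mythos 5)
Your proposal is correct and follows essentially the same route as the paper, which proves the statement (citing \cite[Proposition~2.10]{BIK2015}) by combining the colimit descriptions $\Ga_\calS F=\colim_{H_S\to F}\Hom_\T(-,S)$ over the filtered slice category $\calS/F$ and $L_\calS H_X=\colim_{X\to Y}\Hom_\T(-,Y)$ over morphisms with cone in $\calS$, i.e.\ by taking filtered colimits of the long exact sequences induced by the triangles $S\to X\to Y\to\Si S$. The cofinality and coherence issues you flag are exactly the points the cited reference handles, and your reduction from general $F$ to representables via Lemmas~\ref{le:flat} and \ref{le:exact-fun} matches the intended argument.
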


The sequence \eqref{eq:loc-seq} is functorial with
respect to $\calS$. Thus each inclusion $\calS_1\subseteq\calS_2$ of
triangulated subcategories of $\T$ induces a morphism between the
corresponding long exact sequences. An immediate consequence is the following.

\begin{lem}[{\cite[Lemma~2.16]{BIK2015}}]
  \label{le:rules-commute}
  \pushQED{\qed}
Let $\calS_1\subseteq \calS_2\subseteq\T$ be triangulated
subcategories and let $(\Ga_1,L_1)$ and $(\Ga_2,L_2)$ be the
corresponding pairs of (co)localisation functors on $\Coh\T$. The
morphisms in \eqref{eq:loc-seq} induce isomorphisms
\[
\Ga_1\Ga_2\cong\Ga_1\cong\Ga_2\Ga_1,\quad L_1L_2\cong L_2\cong
L_2 L_1,\quad \Ga_1 L_2=0=L_2\Ga_1,\quad \Ga_2 L_1\cong L_1\Ga_2.\qedhere
\]
\end{lem}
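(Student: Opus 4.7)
The plan is to exploit, for each $k\in\{1,2\}$, the standard characterisations
$F\in\Coh\calS_k\Lra\Ga_k F\iso F\Lra L_k F=0$ and
$F\in\Coh(\T/\calS_k)\Lra L_k F\iso F\Lra\Ga_k F=0$ stated just before Proposition~\ref{pr:loc-seq}, together with the exactness of all four functors (Lemma~\ref{le:exact-fun}) and the long exact sequence \eqref{eq:loc-seq}. The inclusion $\calS_1\subseteq\calS_2$ has two consequences I would first record: $\Coh\calS_1\subseteq\Coh\calS_2$, since the representables $H_S$ for $S\in\calS_1$ already lie in $\Coh\calS_2$ and cohomological functors are filtered colimits of representables by Lemma~\ref{le:flat}; and $\Coh(\T/\calS_2)\subseteq\Coh(\T/\calS_1)$, since any functor factoring through $\T/\calS_2$ also factors through the intermediate quotient $\T/\calS_1$.

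Applying the first inclusion to $\Ga_1 F$ and the second to $L_2 F$, the characterisations produce four of the claimed identities in one stroke: $\Ga_2\Ga_1\iso\Ga_1$, $L_2\Ga_1=0$, $L_1 L_2\iso L_2$ and $\Ga_1 L_2=0$. To derive $\Ga_1\Ga_2\iso\Ga_1$, I would apply the exact functor $\Ga_1$ to the long exact sequence \eqref{eq:loc-seq} associated to $\calS_2$; since $\Ga_1 L_2=0$ at every shift, the sequence collapses to the required isomorphism. The symmetric move, applying $L_2$ to the sequence for $\calS_1$ and using $L_2\Ga_1=0$, yields $L_2\iso L_2 L_1$.

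The final isomorphism $\Ga_2 L_1\cong L_1\Ga_2$ is the most delicate step, and here the plan is to establish a chain $L_1\Ga_2\iso L_1\Ga_2 L_1\iso\Ga_2 L_1$. For the first link, apply $\Ga_2$ to \eqref{eq:loc-seq} for $\calS_1$ and use $\Ga_2\Ga_1\cong\Ga_1$ to obtain an exact sequence of shape $\cdots\to\Ga_1\to\Ga_2\to\Ga_2 L_1\to\Si\Ga_1\to\cdots$; then apply $L_1$ and invoke the standard intrinsic vanishing $L_1\Ga_1=0$ (which follows from the characterisation above). For the second link, observe that $\Ga_1(\Ga_2 L_1)\cong\Ga_1 L_1=0$ using the already proved $\Ga_1\Ga_2\cong\Ga_1$, so $\Ga_2 L_1$ takes values in $\Coh(\T/\calS_1)$ and $L_1$ therefore acts as the identity on it.

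The main obstacle is organisational: one must keep the order of deductions straight to avoid circularity, since $\Ga_2 L_1\cong L_1\Ga_2$ rests on $\Ga_1\Ga_2\cong\Ga_1$, which in turn needs the preliminary $\Ga_1 L_2=0$. Once the dependencies are respected, every remaining step is a one-line application of exactness of $\Ga_k$ or $L_k$ to the long exact sequence of Proposition~\ref{pr:loc-seq}, so I do not anticipate any further subtleties.
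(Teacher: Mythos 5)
Your argument is correct, and it is exactly the standard way of making precise what the paper asserts: the paper itself gives no proof beyond citing \cite[Lemma~2.16]{BIK2015} and remarking that the identities follow immediately from the functoriality of the localisation sequence \eqref{eq:loc-seq} in $\calS$, which is precisely the mechanism you spell out (the containments $\Coh\calS_1\subseteq\Coh\calS_2$ and $\Coh(\T/\calS_2)\subseteq\Coh(\T/\calS_1)$, the membership characterisations, and exactness of the four functors). The order of deductions you lay out, including the two-step chain for $\Ga_2L_1\cong L_1\Ga_2$, is sound and free of circularity.
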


The rest of this paper makes excessive use of the category of
functors
\[\Coh\T\lto\Coh\T\]
that are exact and perserve filtered colimits. We consider them
\emph{up to isomorphism}, which means that we identify functors $F$
and $G$ when there is a natural isomorphism $F\iso G$. Let us denote
by $\End\T$ the isomorphism classes of objects. Then we have the
following relevant structures:
\begin{enumerate}
\item an associative multiplication $\End\T\times\End\T\to\End\T$ with 
  an identity (given by the composition of functors), 
\item an associative sum $\End\T\times\End\T\to\End\T$ with 
  an identity (given by the direct sum of functors), 
  \item a suspension $\Si\colon\End\T\iso\End\T$, and
\item a notion of exact sequence $F\to G\to H$ with connecting
  morphism $H\to\Si F$, providing a long exact sequence
  \[\cdots\lto \Si^{n-1}H\lto \Si^nF\lto \Si^nG\lto \Si^nH\lto \Si^{n+1} F\lto\cdots.\]
\end{enumerate}

\section{The lattice of central subcategories}\label{se:centre}

Throughout we keep fixed an essentially small triangulated category $\T$.  Let
$\Thick\T$ denote the lattice of thick subcategories of $\T$. For
$\U,\V$ in $\Thick\T$ we write $\U\wedge\V$ for their meet (i.e. the
intersection) and $\U\vee\V$ for their join (i.e\ the smallest thick
subcategory containing $\U$ and $\V$). For a class of objects
$\X\subseteq\T$ let $\thick\X$ denote the smallest thick subcategory
containing $\X$.

\subsection*{Central subcategories}

Given a pair $\U,\V$  of thick subcategories we consider the property
that morphisms between objects from $\U$ and $\V$ factor through objects
in $\U\wedge\V$. This condition appears already in
\cite[II.2.3]{Ve1997}, though the equivalent conditions (2)--(4) in the
lemma below seem to be new.

\begin{lem}\label{le:U-V-central}
  For a pair $\U,\V\in\Thick\T$ the following are equivalent.
\begin{enumerate}
\item Every morphism $\U\ni U\to V\in\V$ factors through an object in
  $\U\wedge\V$. 
\item The canonical morphism $\Ga_{\U\wedge\V}\to\Ga_\U\Ga_\V$ is an
  isomorphism.
\item The canonical morphism $L_{\U\wedge\V}\Ga_\V\to L_\U\Ga_\V$ is
  an isomorphism.
\item The canonical morphism $\Ga_\U L_{\U\wedge\V}\to \Ga_\U L_\V$ is
  an isomorphism.
\item The canonical morphism
  $\Hom_{\T/(\U\wedge\V)}(-,V)\to \Hom_{\T/\U}(-,V)$ is an isomorphism
  for all $V\in\V$.
\end{enumerate}
\end{lem}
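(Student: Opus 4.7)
My plan is to prove the chain $(5)\Leftrightarrow(3)\Leftrightarrow(2)\Leftrightarrow(4)$ by manipulating the long exact sequence of Proposition~\ref{pr:loc-seq} together with the composition rules of Lemma~\ref{le:rules-commute}, and then to handle $(1)\Leftrightarrow(5)$ by reducing, via the same long exact sequence, to an isomorphism of colocalisation functors $\Gamma_{\U\wedge\V}H_V\iso\Gamma_\U H_V$ that translates directly into a factorisation condition on morphisms.

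For $(2)\Leftrightarrow(3)$, I whisker the exact triangles $\Gamma_{\U\wedge\V}\to\id\to L_{\U\wedge\V}$ and $\Gamma_\U\to\id\to L_\U$ on the right by $\Gamma_\V$. Lemma~\ref{le:rules-commute} applied to the inclusion $\U\wedge\V\subseteq\V$ yields $\Gamma_{\U\wedge\V}\Gamma_\V\cong\Gamma_{\U\wedge\V}$, so these become $\Gamma_{\U\wedge\V}\to\Gamma_\V\to L_{\U\wedge\V}\Gamma_\V$ and $\Gamma_\U\Gamma_\V\to\Gamma_\V\to L_\U\Gamma_\V$. The inclusion $\U\wedge\V\subseteq\U$ induces a morphism between them that is the identity on the middle term $\Gamma_\V$, so by the five lemma applied to the resulting morphism of long exact sequences the left map is an isomorphism if and only if the right is. The equivalence $(2)\Leftrightarrow(4)$ is the mirror image: whisker the same two triangles on the left by $\Gamma_\U$, invoke $\Gamma_\U\Gamma_{\U\wedge\V}\cong\Gamma_{\U\wedge\V}$, and compare via the inclusion $\U\wedge\V\subseteq\V$.

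For $(3)\Leftrightarrow(5)$, condition (5) is precisely (3) evaluated at the representables $F=H_V$ with $V\in\V$, using $\Gamma_\V H_V\cong H_V$ and the identification $L_\calS H_V=\Hom_{\T/\calS}(-,V)$. Conversely, every object of $\Coh\V$ is a filtered colimit of such representables, and both $L_\U$ and $L_{\U\wedge\V}$ preserve filtered colimits by Lemma~\ref{le:exact-fun}, so (5) propagates to (3).

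Finally, for $(1)\Leftrightarrow(5)$, the same five-lemma trick applied to the triangles $\Gamma_{\U\wedge\V}H_V\to H_V\to L_{\U\wedge\V}H_V$ and $\Gamma_\U H_V\to H_V\to L_\U H_V$ reduces (5) to the requirement that the natural map $\Gamma_{\U\wedge\V}H_V\to\Gamma_\U H_V$ is an isomorphism. Using the colimit description $\Gamma_\calS H_V=\colim_{S\to V,\,S\in\calS}H_S$, this natural map comes from the inclusion of filtered slice categories $(\U\wedge\V)/H_V\hookrightarrow\U/H_V$ and is an isomorphism precisely when every object $(U,g\colon U\to V)$ admits a morphism in the slice to some $(W,h\colon W\to V)$ with $W\in\U\wedge\V$, that is, when every $g\colon U\to V$ factors through $\U\wedge\V$; this is condition~(1). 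The main delicate step I anticipate is checking that an isomorphism of the $\Gamma$'s genuinely forces this cofinality (and not merely agreement of images in $H_V$), which I would verify by evaluating at $X=U\in\U$ and extracting a common refinement for the colimit class $[U,g,\id_U]$ to obtain the desired factorisation $g=h\beta$ with $\beta\colon U\to W$ and $W\in\U\wedge\V$.
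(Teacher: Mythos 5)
Your proposal is correct, and three of its four equivalences coincide with the paper's proof: the five-lemma comparisons of localisation sequences for $(2)\Leftrightarrow(3)$ and $(2)\Leftrightarrow(4)$, and the reduction of $(3)$ to the representables $H_V$ for $(3)\Leftrightarrow(5)$, are exactly what the paper does. The one genuine divergence is how condition (1) is attached: the paper proves $(1)\Leftrightarrow(2)$ directly, using the observation that $F\in\Coh\T$ lies in $\Coh\W$ for a thick subcategory $\W$ if and only if every morphism $H_X\to F$ factors through some $H_W$ with $W\in\W$, applied to the objects $\Ga_\U\Ga_\V F$; you instead prove $(1)\Leftrightarrow(5)$ by a further five-lemma reduction to $\Ga_{\U\wedge\V}H_V\iso\Ga_\U H_V$ followed by a cofinality analysis of the inclusion of filtered slice categories indexing $\Ga_\U H_V=\colim_{U\to V}H_U$. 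Both routes rest on the same underlying fact that these functors are filtered colimits of representables; yours is marginally more hands-on but buys nothing extra. The step you flag as delicate does close exactly as you anticipate: surjectivity of $(\Ga_{\U\wedge\V}H_V)(U)\to(\Ga_\U H_V)(U)$ on the class of $\id_U$ at the index $(U,g)$ yields, after passing to a common refinement $(U'',g'')$ in the filtered slice category $\U/H_V$, an honest factorisation $g=h\beta$ with $h\colon W\to V$, $W\in\U\wedge\V$, because the comparison morphisms into $(U'',g'')$ live over $H_V$. Two small wording caveats: in $(2)\Leftrightarrow(4)$ the two triangles being compared are those for $\U\wedge\V$ and for $\V$ (not literally ``the same two'' as in the previous step), though your reference to the inclusion $\U\wedge\V\subseteq\V$ shows this is what you intend; and for the easy direction of your cofinality claim one should note that a full subcategory of a filtered category into which every object maps is automatically cofinal (connectedness of the comma categories follows from filteredness), so the colimits indeed agree.
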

\begin{proof}
  When $\Coh\U$ is viewed as a full subcategory of $\Coh\T$, then the
  objects in $\Coh\U$ are precisely the filtered colimits of objects
  $H_U$ with $U\in\U$. From this it follows that $F\in\Coh\T$ belongs
  to $\Coh\U$ if and only if every morphism $H_X\to F$ with $X\in\T$
  factors through $H_U$ for some $U\in\U$.

(1) $\Leftrightarrow$ (2) 
  The  above general observation shows that the first
  two conditions are equivalent to the property that each object of
  the form $\Ga_\U\Ga_\V F$ belongs to $\Coh\U\wedge\V$.

  (2) $\Leftrightarrow$ (3) Composition of $\Ga_\V$ with the exact
  sequences \eqref{eq:loc-seq} given by $\U$ and $\U\wedge\V$ yield
  the following diagram.
\[\begin{tikzcd}[column sep = small]
    \cdots\arrow{r}&
    \Si^{-1}L_{\U\wedge\V}\Ga_{\V}\arrow{r}\arrow{d}{\Si^{-1}\b}&\Ga_{\U\wedge\V}\arrow{r}\arrow{d}{\a}&
    \Ga_{\V}\arrow{r}\arrow{d}{\id}&
    L_{\U\wedge\V}\Ga_{\V}\arrow{r}\arrow{d}{\b}&\Si\Ga_{\U\wedge\V}\arrow{r}\arrow{d}{\Si\a}&\cdots\\
    \cdots\arrow{r}&
    \Si^{-1}L_{\U}\Ga_{\V}\arrow{r}&\Ga_{\U}\Ga_\V\arrow{r}&\Ga_{\V}\arrow{r}&
    L_{\U}\Ga_{\V}\arrow{r}&\Si\Ga_{\U\wedge\V}\arrow{r}&\cdots
\end{tikzcd}\] The five lemma implies that $\a$ is an isomorphism
if and only if $\b$ is an isomorphism.

 (2) $\Leftrightarrow$ (4) Composition of $\Ga_\U$ with the exact
  sequences \eqref{eq:loc-seq} given by $\V$ and $\U\wedge\V$ yield
  the following diagram.
\[\begin{tikzcd}[column sep = small]
    \cdots\arrow{r}&
    \Si^{-1}\Ga_{\U} L_{\U\wedge\V}\arrow{r}\arrow{d}{\Si^{-1}\b'}&\Ga_{\U\wedge\V}\arrow{r}\arrow{d}{\a}&
    \Ga_{\U}\arrow{r}\arrow{d}{\id}&
    \Ga_{\U} L_{\U\wedge\V}\arrow{r}\arrow{d}{\b'}&\Si\Ga_{\U\wedge\V}\arrow{r}\arrow{d}{\Si\a}&\cdots\\
    \cdots\arrow{r}&
    \Si^{-1}\Ga_{\U} L_{\V}\arrow{r}&\Ga_{\U}\Ga_\V\arrow{r}&\Ga_{\U}\arrow{r}&
    \Ga_{\U} L_{\V}\arrow{r}&\Si\Ga_{\U\wedge\V}\arrow{r}&\cdots
\end{tikzcd}\] The five lemma implies that $\a$ is an isomorphism
if and only if $\b'$ is an isomorphism.

(3) $\Leftrightarrow$ (5) For $V\in\V$ the inclusion
$\U\wedge\V\subseteq\U$ induces the morphism
\[\Hom_{\T/(\U\wedge\V)}(-,V)\cong L_{\U\wedge\V}\Ga_\V H_V\lto
  L_\U\Ga_\V H_V\cong \Hom_{\T/\U}(-,V).\]
As any functor of the form $\Ga_\V F$ is a filtered colimit of
representable functors $H_V$, this morphism is an isomorphism for all
$V$ if and only if  $L_{\U\wedge\V}\Ga_\V\iso L_\U\Ga_\V$.
\end{proof}

We note that the equivalent conditions of the preceding lemma imply the
\emph{second Noether isomorphisms}, i.e.\ the canonical functors
\[\V/(\U\wedge\V)\lto(\U\vee\V)/\U\qquad\text{and}\qquad\U/(\U\wedge\V)\lto(\U\vee\V)/\V\]
are equivalences up to direct summands. This is clear for the first
one and follows for the other by passing to the opposite category $\T^\op$. 

\begin{defn}\label{de:centre}
  A pair of elements $\U,\V$ in $\Thick\T$ is called \emph{commmuting}
  if we have canonical isomorphisms
\[\Ga_\U\Ga_\V\leftiso\Ga_{\U\wedge\V}\iso
  \Ga_\V\Ga_\U.\] An equivalent condition is that a morphism between
objects of $\U$ and $\V$ (in either direction) factors
through an object of $\U\wedge\V$.

An element $\U$ in $\Thick\T$ is called \emph{central} if it commutes
with all $\V$ in $\Thick\T$.
\end{defn}

We denote by $Z(\Thick\T)$ the set of central thick subcategories of
$\T$; it is partially ordered by inclusion. In the following we study
the basic properties of $Z(\Thick\T)$. In particular we will see that
it is a sublattice of $\Thick\T$.

We continue with a sequence of technical lemmas which establish
further identities for exact functors $\Coh\T\to\Coh\T$ given by a
pair of thick subcategories. The next one shows that
commutativity is equivalent to an excision property.

\begin{lem}\label{le:commuting}
  For a  pair $\U,\V$ in $\Thick\T$ the following are equivalent.
  \begin{enumerate}
  \item The pair $\U,\V$ is commuting.
\item  The canonical morphism
  \[\psi\colon L_{\U\wedge\V}\Ga_{\U\vee\V}\lto L_{\U}\Ga_{\U\vee\V}\oplus
    L_{\V}\Ga_{\U\vee\V}\] is an isomorphism.
\item The canonical morphism
  \[\psi_X\colon\Hom_{\T/(\U\wedge\V)}(-,X)\lto
    \Hom_{\T/\U}(-,X)\oplus \Hom_{\T/\V}(-,X)\] is an isomorphism for
  all $X\in\U\vee\V$.
\item The canonical morphism
  $L_{\U\wedge\V}\Ga_\V\to L_\U\Ga_{\U\vee\V}$ is an isomorphism.
  \end{enumerate}
\end{lem}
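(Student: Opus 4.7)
First I would establish (2) $\Leftrightarrow$ (3) by a filtered-colimit argument. Since $\Ga_{\U\vee\V}F$ is a filtered colimit of representables $H_X$ with $X\in\U\vee\V$ (Lemma~\ref{le:flat}) and all the functors $L_?$ and $\Ga_?$ preserve filtered colimits (Lemma~\ref{le:exact-fun}), the map $\psi$ at $F$ is canonically the filtered colimit of the maps $\psi_X$. Combined with $L_\calS H_X = \Hom_{\T/\calS}(-,X)$, this makes $\psi$ an isomorphism iff each $\psi_X$ is.

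Next I would prove (1) $\Leftrightarrow$ (3) by showing that the class $\Y := \{X \in \T \mid \psi_X \text{ is an isomorphism}\}$ is thick and contains $\U\cup\V$ precisely when the pair commutes. The long exact $\Hom_{\T/\calS}$ sequences attached to a triangle together with the five lemma make $\Y$ a thick subcategory of $\T$. Assuming (1), for $V \in \V$ the summand $\Hom_{\T/\V}(-,V)$ vanishes, so $\psi_V$ being iso reduces to the natural morphism $\Hom_{\T/(\U\wedge\V)}(-,V) \to \Hom_{\T/\U}(-,V)$ being iso, which is Lemma~\ref{le:U-V-central}(5) for the pair $(\U,\V)$; this holds because the pair commutes. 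A symmetric argument (swap $\U$ and $\V$) gives $U \in \Y$ for $U \in \U$. Hence $\Y \supseteq \thick(\U\cup\V) = \U\vee\V$, proving (3). Conversely, reading the same equivalences backwards, (3) evaluated at $V \in \V$ and at $U \in \U$ recovers Lemma~\ref{le:U-V-central}(5) for both pairs $(\U,\V)$ and $(\V,\U)$, which is commuting.

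Finally I would link (4) to the previous conditions. For (4) $\Rightarrow$ (1) I evaluate at representables: on $H_V$ with $V \in \V$ we have $\Ga_\V H_V = H_V = \Ga_{\U\vee\V}H_V$, so (4) becomes exactly Lemma~\ref{le:U-V-central}(5) for $(\U,\V)$; on $H_U$ with $U \in \U$ we have $L_\U H_U = 0$, so (4) forces $\Ga_\V H_U \in \Coh(\U\wedge\V)$, which is Lemma~\ref{le:U-V-central}(1) for the swapped pair $(\V,\U)$. Together these amount to commutativity. For (1) $\Rightarrow$ (4) I route through (2): applying $L_{\U\wedge\V}$ to the $\V$-localisation triangle $\Ga_\V\Ga_{\U\vee\V} \to \Ga_{\U\vee\V} \to L_\V\Ga_{\U\vee\V}$ and using the identities $L_{\U\wedge\V}L_\V = L_\V$ and $\Ga_\V\Ga_{\U\vee\V} = \Ga_\V$ from Lemma~\ref{le:rules-commute} produces the exact triangle
\[L_{\U\wedge\V}\Ga_\V \lto L_{\U\wedge\V}\Ga_{\U\vee\V} \lto L_\V\Ga_{\U\vee\V}.\]
The canonical splitting from (2) then identifies the fibre of the right-hand map with the complementary summand $L_\U\Ga_{\U\vee\V}$, which yields the isomorphism of (4). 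The step I expect to be most delicate is verifying that the isomorphism extracted in this way really coincides with the canonical natural transformation named in (4), rather than being only an abstract isomorphism; this is a bookkeeping matter reducing to naturality of the localisation sequence \eqref{eq:loc-seq} together with the identities of Lemma~\ref{le:rules-commute}.
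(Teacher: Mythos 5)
Your proposal is correct, and for the most part it runs parallel to the paper's argument: the equivalence of (2) and (3) via filtered colimits of representables, and the characterisation of (1) through the thick subcategory $\Y$ of objects where $\psi_X$ is invertible, are exactly the paper's mechanism (the paper phrases the thick-subcategory step as (1)~$\Rightarrow$~(2) and then gets (3) by evaluating $\psi$ at $H_X$). Your treatment of (4) is where you genuinely diverge. For (4)~$\Rightarrow$~(1) you evaluate at representables $H_V$ and $H_U$, extracting condition (5) of Lemma~\ref{le:U-V-central} for $(\U,\V)$ and the containment $\Ga_\V H_U\in\Coh(\U\wedge\V)$ for $(\V,\U)$; the paper instead composes the isomorphism on the right with $\Ga_\V$ and with $\Ga_\U$ and reads off the same two facts at the level of functors. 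These are the same computation seen through the colimit dictionary, and both are fine. For (1)~$\Rightarrow$~(4) you route through (2): you apply $L_{\U\wedge\V}$ to the localisation sequence of $\V$ composed with $\Ga_{\U\vee\V}$, simplify with Lemma~\ref{le:rules-commute}, and use the splitting $\psi$ to identify $L_{\U\wedge\V}\Ga_\V$ with the complementary summand $L_\U\Ga_{\U\vee\V}$. The paper avoids this and instead repeats the thick-subcategory argument for the class of $X$ with $L_{\U\wedge\V}\Ga_\V H_X\iso L_\U\Ga_{\U\vee\V}H_X$, which sidesteps the compatibility check you rightly flag. Your bookkeeping does close: the composite $L_{\U\wedge\V}\Ga_\V\to L_{\U\wedge\V}\Ga_{\U\vee\V}\to L_\V\Ga_{\U\vee\V}$ factors through $L_\V\Ga_\V=0$, so the image of the first map lands in the summand $L_\U\Ga_{\U\vee\V}$ via the canonical component of $\psi$, and since the projection is split epi the connecting morphisms in the long exact sequence vanish, forcing the induced (canonical) map to be an isomorphism. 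So both steps work; the paper's route is more uniform (one template argument used three times), while yours makes the Mayer--Vietoris splitting of Proposition~\ref{pr:MV} visible already inside the proof of the lemma.
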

\begin{proof}
  (1) $\Rightarrow$ (2) The objects $X\in\T$ such that
  \[ L_{\U\wedge\V}H_X\lto L_{\U}H_X\oplus L_{\V}H_X\] is an
  isomorphism form a thick subcategory $\X\subseteq\T$. From
  Lemma~\ref{le:U-V-central} it follows that  $\U,\V\subseteq\X$.
  Thus $\psi$ is an isomorphism, as any functor of the form
  $\Ga_{\U\vee\V} F$ is a filtered colimit of representable functors
  $H_X$ with $X\in\U\vee\V$.

  (2) $\Rightarrow$ (3) We have $\psi H_X=\psi_X$ for each $X\in\U\vee\V$.

  (3) $\Rightarrow$ (1) For $X\in\V$ the morphism $\psi_X$ specialises
  to $\Hom_{\T/(\U\wedge\V)}(-,X)\to \Hom_{\T/\U}(-,X)$, and similar
  for $X\in\U$. Thus $\U$ and $\V$ are commuting, again by
  Lemma~\ref{le:U-V-central}.

  (1) $\Rightarrow$ (4)  The objects $X\in\T$ such that
  $L_{\U\wedge\V}\Ga_\V H_X\to L_\U\Ga_{\U\vee\V}H_X$ is an
  isomorphism form a thick subcategory $\X\subseteq\T$. From
  Lemma~\ref{le:U-V-central} it follows that $\U,\V\subseteq\X$.  Thus
  the composite
  \[L_{\U\wedge\V}\Ga_\V\leftiso L_{\U\wedge\V}\Ga_\V\Ga_{\U\vee\V}\to
    L_\U\Ga_{\U\vee\V}\] is an isomorphism, as any functor of the form
  $\Ga_{\U\vee\V} F$ is a filtered colimit of representable functors
  $H_X$ with $X\in\U\vee\V$.

  (4) $\Rightarrow$ (1)  Suppose $L_{\U\wedge\V}\Ga_\V\iso
  L_\U\Ga_{\U\vee\V}$. Composing with $\Ga_\V$ yields   $L_{\U\wedge\V}\Ga_\V\iso
  L_\U\Ga_{\V}$, and therefore $\Ga_{\U\wedge\V}\iso
  \Ga_\U\Ga_\V$ by  Lemma~\ref{le:U-V-central}.  Composing with $\Ga_\U$ yields
  $L_{\U\wedge\V}\Ga_\V\Ga_\U=0$, and therefore
\[\Ga_{\U\wedge\V}\leftiso\Ga_{\U\wedge\V}\Ga_\V\Ga_\U  \iso\Ga_\V\Ga_\U.\]
  Thus the pair $\U,\V$ is commuting.
\end{proof}

\begin{lem}\label{le:U-V-central-localisation}
  Let $\U,\V\in\Thick\T$ be commuting. Then the following holds.
\begin{enumerate}
\item The canonical morphism $L_\U L_\V\to L_{\U\vee\V}$ is an 
      isomorphism. 
\item The canonical morphism $L_\U \Ga_\V\to L_\U\Ga_{\U\vee\V}$ is an 
   isomorphism.
\item The canonical morphism $\Ga_\U L_\V\to \Ga_{\U\vee\V}L_\V$ is an 
   isomorphism.
 \end{enumerate}
\end{lem}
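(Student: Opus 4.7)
The plan is to prove (2) first as an immediate consequence of the preceding lemmas, then deduce (1) via a five-lemma comparison of localisation sequences, and finally deduce (3) by a symmetric argument. The overarching principle is that the long exact sequence of Proposition \ref{pr:loc-seq} is functorial in the subcategory, so any inclusion $\calS\subseteq\calS'$ produces a morphism of long exact sequences to which the five lemma can be applied.

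For (2), I combine Lemma \ref{le:U-V-central}(3), which gives $L_{\U\wedge\V}\Ga_\V \iso L_\U\Ga_\V$, with Lemma \ref{le:commuting}(4), which gives $L_{\U\wedge\V}\Ga_\V \iso L_\U\Ga_{\U\vee\V}$. Composing these identifies $L_\U\Ga_\V$ with $L_\U\Ga_{\U\vee\V}$ via the canonical morphism induced by the inclusion $\V\subseteq\U\vee\V$.

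For (1), I compose the localisation sequences of Proposition \ref{pr:loc-seq} for $\V$ and for $\U\vee\V$ on the left with the exact functor $L_\U$, obtaining two long exact sequences. The inclusion $\V\subseteq\U\vee\V$ provides a morphism between them that is the identity on the $L_\U$ term and agrees with the canonical map $L_\U\Ga_\V \to L_\U\Ga_{\U\vee\V}$, shown in (2) to be an isomorphism. The five lemma then forces $L_\U L_\V \iso L_\U L_{\U\vee\V}$, and Lemma \ref{le:rules-commute} applied to $\U\subseteq\U\vee\V$ identifies the right-hand side with $L_{\U\vee\V}$. Part (3) is entirely analogous, this time composing on the right with $L_\V$: the morphism of long exact sequences linking $\Ga_\U L_\V \to L_\V \to L_\U L_\V$ to $\Ga_{\U\vee\V} L_\V \to L_\V \to L_{\U\vee\V} L_\V$ is the identity in the middle, while the right-hand map is an isomorphism by part (1) together with $L_{\U\vee\V} L_\V \cong L_{\U\vee\V}$ from Lemma \ref{le:rules-commute}; the five lemma then supplies the desired isomorphism on the left.

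The main obstacle is bookkeeping rather than genuine difficulty: one must verify that the isomorphisms produced by the five lemma really coincide with the canonical morphisms named in the statement, rather than merely with some abstract isomorphism of the two end terms. This is automatic once the morphisms of long exact sequences are constructed via the functoriality of Proposition \ref{pr:loc-seq} with respect to inclusions of subcategories, a tool already exploited in the proofs of Lemmas \ref{le:U-V-central} and \ref{le:commuting}.
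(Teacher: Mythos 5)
Your proposal is correct and follows essentially the same route as the paper: part (2) via the two-out-of-three argument combining Lemma~\ref{le:U-V-central}(3) with Lemma~\ref{le:commuting}(4), and parts (1) and (3) by applying the five lemma to the morphism of localisation sequences obtained by composing \eqref{eq:loc-seq} with $L_\U$ (resp.\ $L_\V$) and invoking Lemma~\ref{le:rules-commute} to identify $L_\U L_{\U\vee\V}$ and $L_{\U\vee\V}L_\V$ with $L_{\U\vee\V}$. The paper merely packages (1) and (2) into a single diagram and dispatches (3) as ``a similar argument,'' which is exactly the one you spell out.
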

\begin{proof}
 Consider the exact sequences \eqref{eq:loc-seq} given by $\V$
  and $\U\vee\V$. We compose with $L_\U$ and obtain the following
  diagram.
  \[\begin{tikzcd}[column sep = small]
    \cdots\arrow{r}&
    \Si^{-1}L_{\U} L_{\V}\arrow{r}\arrow{d}{\Si^{-1}\d}&L_{\U}\Ga_\V\arrow{r}\arrow{d}{\g}&
    L_{\U}\arrow{r}\arrow{d}{\id}&
    L_{\U}L_{\V}\arrow{r}\arrow{d}{\d}&\Si L_{\U}\Ga_\V\arrow{r}\arrow{d}{\Si\g}&\cdots\\
    \cdots\arrow{r}&
    \Si^{-1}L_{\U\vee\V}\arrow{r}&L_\U\Ga_{\U\vee\V}\arrow{r}&L_{\U}\arrow{r}&
    L_{\U\vee\V}\arrow{r}&\Si L_\U\Ga_{\U\vee\V}\arrow{r}&\cdots
  \end{tikzcd}\] Now consider
the composite
\[L_{\U\wedge\V}\Ga_\V\xto{\b} L_\U\Ga_{\V}\xto{\g}
  L_\U\Ga_{\U\vee\V}.\] Then $\b$ is an isomorphism by
Lemma~\ref{le:U-V-central}, and $\g\b$ is an isomorphisms by
Lemma~\ref{le:commuting}.  Thus $\g$ is an isomorphism, and it follows
from the five lemma that $\d$ is an isomorphism. A similar argument
yields the last isomorphism.
\end{proof}

\begin{lem}\label{le:central-loc-coloc}
  Let $\U,\V\in\Thick\T$ be commuting.  Then we have canonical
  isomorphisms
  \[L_\U L_\V\iso L_{\U\vee\V}\leftiso L_\V L_\U.\]
Also, we have canonical isomorphisms
\[\Ga_\V L_\U\leftiso \Ga_{\V} L_{\U\wedge\V}\cong L_{\U\wedge\V} \Ga_{\V}
  \iso L_\U\Ga_\V\]
and
\[\Ga_\V L_\U\iso \Ga_{\U\vee\V} L_\U\cong L_\U \Ga_{\U\vee\V}
  \leftiso L_\U\Ga_\V.\]
\end{lem}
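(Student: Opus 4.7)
The strategy is to assemble the three chains of isomorphisms from the identities established earlier in the section; essentially everything is bookkeeping, except for one place that requires a small triangle argument.

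The first chain $L_\U L_\V \iso L_{\U\vee\V} \leftiso L_\V L_\U$ is immediate from Lemma~\ref{le:U-V-central-localisation}(1), applied to the commuting pair $(\U,\V)$ and then to $(\V,\U)$, using that commutativity is a symmetric condition.

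The third chain decomposes as follows: $\Ga_\V L_\U \iso \Ga_{\U\vee\V} L_\U$ is Lemma~\ref{le:U-V-central-localisation}(3) with the roles of $\U$ and $\V$ exchanged; $\Ga_{\U\vee\V} L_\U \cong L_\U \Ga_{\U\vee\V}$ is Lemma~\ref{le:rules-commute} applied to the inclusion $\U \subseteq \U\vee\V$; and $L_\U \Ga_{\U\vee\V} \leftiso L_\U \Ga_\V$ is Lemma~\ref{le:U-V-central-localisation}(2).

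For the second chain $\Ga_\V L_\U \leftiso \Ga_\V L_{\U\wedge\V} \cong L_{\U\wedge\V} \Ga_\V \iso L_\U \Ga_\V$, the middle iso is Lemma~\ref{le:rules-commute} applied to $\U\wedge\V \subseteq \V$, and the last iso is Lemma~\ref{le:U-V-central}(3). The genuinely new step, and the only real obstacle, is the first iso $\Ga_\V L_{\U\wedge\V} \iso \Ga_\V L_\U$. Here I would apply the exact functor $L_{\U\wedge\V}$ to the localisation triangle $\Ga_\U \to \id \to L_\U$, and then use the identity $L_{\U\wedge\V} L_\U \cong L_\U$ from Lemma~\ref{le:rules-commute} to obtain a natural exact triangle
\[L_{\U\wedge\V} \Ga_\U \lto L_{\U\wedge\V} \lto L_\U.\]
Applying $\Ga_\V$ turns this into the desired statement, provided the leftmost term vanishes. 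For this I would appeal to Lemma~\ref{le:rules-commute} again, now for $\U\wedge\V \subseteq \V$, to rewrite $\Ga_\V L_{\U\wedge\V} \Ga_\U \cong L_{\U\wedge\V} \Ga_\V \Ga_\U$; then the commutativity of $\U$ and $\V$ gives $\Ga_\V \Ga_\U \cong \Ga_{\U\wedge\V}$, and the standard vanishing $L_{\U\wedge\V} \Ga_{\U\wedge\V} = 0$ closes the argument.
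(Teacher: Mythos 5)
Your proposal is correct, and for the first and third chains it follows the paper's route exactly: both are assembled from Lemma~\ref{le:U-V-central-localisation} applied to $(\U,\V)$ and to $(\V,\U)$ (legitimate, since commutativity is symmetric), with Lemma~\ref{le:rules-commute} supplying the middle isomorphisms. The only divergence is in the second chain, where the step you single out as ``the only real obstacle'', namely $\Ga_\V L_{\U\wedge\V}\iso\Ga_\V L_\U$, is in fact not new: it is precisely condition (4) of Lemma~\ref{le:U-V-central} with the roles of $\U$ and $\V$ exchanged (the pair being commuting, the equivalent conditions of that lemma hold in both orderings), and this is how the paper disposes of the entire second chain. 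Your triangle argument --- applying $L_{\U\wedge\V}$ to the localisation sequence for $\U$, then $\Ga_\V$, and killing $\Ga_\V L_{\U\wedge\V}\Ga_\U$ via $\Ga_\V\Ga_\U\cong\Ga_{\U\wedge\V}$ and $L_{\U\wedge\V}\Ga_{\U\wedge\V}=0$ --- is a valid independent reproof of that condition, but it buys nothing here; it essentially re-runs the five-lemma argument already carried out in the proof of Lemma~\ref{le:U-V-central}.
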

\begin{proof}
  The first and third sequence of isomorphisms follow from
  Lemma~\ref{le:U-V-central-localisation} and the second sequence from
  Lemma~\ref{le:U-V-central}.
\end{proof}

The following lemma says that meet and join preserve central elements.

\begin{lem}\label{le:central-res}
  Let $\U,\V\in\Thick\T$ and suppose that $\U$ is central.
  \begin{enumerate}
  \item The element $\U\wedge\V$ is central in $\Thick\V$. 
  \item The element $(\U\vee\V)/\V$ is central in $\Thick\T/\V$.
\end{enumerate}
\end{lem}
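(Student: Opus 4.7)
The plan is to verify condition (1) of Lemma~\ref{le:U-V-central} in each relevant ambient category, leveraging the fact that $\U$ commutes with every thick subcategory of $\T$.

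For (1) I argue directly. Let $\W\in\Thick\V$. Any morphism between $X\in\U\wedge\V$ and $Y\in\W$ is in particular a morphism in $\T$ between an object of $\U$ and an object of $\W$. Centrality of $\U$ in $\Thick\T$ gives a factorisation through some $Z\in\U\wedge\W$. Since $\W\subseteq\V$, we have $\U\wedge\W\subseteq\U\wedge\V$, and therefore $Z$ lies in the meet $(\U\wedge\V)\wedge\W$ computed inside $\Thick\V$. Hence $\U\wedge\V$ commutes with $\W$.

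For (2), recall that $\Thick(\T/\V)$ is isomorphic as a lattice to the sublattice $\{\W'\in\Thick\T:\V\subseteq\W'\}$, with $\W'$ corresponding to $\W'/\V$ and meets and joins preserved. Fix such a $\W'$; the goal is to show $(\U\vee\V)/\V$ commutes with $\W'/\V$. By the second Noether isomorphism recorded just after Lemma~\ref{le:U-V-central}, every object of $(\U\vee\V)/\V$ is a direct summand of $[U]$ for some $U\in\U$. Choose splittings $\iota\colon[A]\to[U]$ and $p\colon[U]\to[A]$ with $p\iota=\id$, and for $f\colon[A]\to[B]$ in $\T/\V$ with $A\in\U\vee\V$, $B\in\W'$, write $f=(fp)\iota$. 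Represent $fp$ by a fraction $U\xto{g}B'\xleftarrow{s}B$ where the cone of $s$ lies in $\V\subseteq\W'$, so that $B'\in\W'$. Centrality of $\U$ factors $g$ through some $Z\in\U\wedge\W'$, which then factors $f$ through $[Z]$ in $\T/\V$. Since $\U\wedge\W'\subseteq(\U\vee\V)\wedge\W'$, the object $[Z]$ lies in the meet of $(\U\vee\V)/\V$ and $\W'/\V$. Morphisms $[B]\to[A]$ are handled dually, composing with $\iota$ on the left instead of $p$ on the right.

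The main obstacle is ensuring that factorisations produced inside $\T$ land in the correct meet after passing to $\T/\V$: the Noether isomorphism reduces an arbitrary object of $\U\vee\V$ in the quotient to a summand of one coming from $\U$, and then the trivial inclusion $\U\wedge\W'\subseteq(\U\vee\V)\wedge\W'$ places the witness $[Z]$ where it needs to be.
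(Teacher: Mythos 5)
Your argument is correct. For part (1) it coincides with the paper's proof: a morphism between $\U\wedge\V$ and $\W\in\Thick\V$ is in particular a morphism between $\U$ and $\W$, so centrality of $\U$ produces a factorisation through $\U\wedge\W=(\U\wedge\V)\wedge\W$. For part (2) you take a genuinely different route. The paper stays entirely inside the functor calculus: it observes that for $\X\supseteq\V$ the functor $\Ga_\X$ restricts to $\Coh\T/\V$ where it identifies with $\Ga_{\X/\V}$, and then strings together the isomorphisms of Lemma~\ref{le:central-loc-coloc} to get $\Ga_\X\Ga_{\U\vee\V}L_\V\cong\Ga_{\U\vee\V}\Ga_\X L_\V$. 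You instead argue at the level of morphisms in the Verdier quotient: reduce an arbitrary object of $(\U\vee\V)/\V$ to a direct summand of some $[U]$ with $U\in\U$ via the second Noether isomorphism (legitimate here, since $\U$ central makes the pair commuting, and the splitting maps $\iota,p$ do live in $\T/\V$ because idempotent completion is fully faithful on existing objects), then lift a morphism $[U]\to[B]$ or $[B]\to[U]$ to $\T$ by a calculus-of-fractions roof whose denominator has cone in $\V\subseteq\W'$, apply centrality of $\U$ in $\T$, and push the witness $Z\in\U\wedge\W'\subseteq(\U\vee\V)\wedge\W'$ back down. Both proofs are sound; the paper's is shorter once the $\Ga/L$ machinery is in place and fits the paper's systematic use of $\End\T$, while yours is more elementary, stays at the level of objects and morphisms, and makes explicit where the factorising object lies. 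The only points worth tightening are cosmetic: note that a general object of the thick subcategory of $\T/\V$ corresponding to $\U\vee\V$ is a priori only a direct summand in $\T/\V$ of some $[A]$ with $A\in\U\vee\V$ (your splitting argument absorbs this), and record that the source of the roof for $[B]\to[U]$ lies in $\W'$ because $\W'$ contains $\V$ and is closed under the relevant triangle.
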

\begin{proof}
  (1) Let $\X\in\Thick\V$. Then any morphism between objects of $\U\wedge\V$
  and $\X$ (in either direction) factors through an object of
  $\U\wedge\X=(\U\wedge\V)\wedge\X$. 

  (2) Observe that for any $\X\in\Thick\T$ containing $\V$ the functor
  $\Ga_\X$ restricts to a functor $\Coh\T/\V\to\Coh\T/\V$ where it
  identifies with $\Ga_{\X/\V}$, since $\Ga_\X L_\V\cong
  L_\V\Ga_\X$. We obtain using  Lemma~\ref{le:central-loc-coloc}
  \[\Ga_\X\Ga_{\U\vee\V} L_\V\cong \Ga_\X \Ga_{\U} L_\V\cong
    \Ga_\U\Ga_\X L_\V\cong \Ga_\U L_\V \Ga_\X \cong \Ga_{\U\vee\V}
    L_\V \Ga_\X \cong \Ga_{\U\vee\V} \Ga_\X L_\V,\] and therefore
  $\Ga_\X \Ga_{\U\vee\V}\cong \Ga_{\U\vee\V}\Ga_\X$ when restricted to
  $\Coh\T/\V$.
\end{proof}

\subsection*{Mayer--Vietoris sequences}

A pair of commuting subcategories gives rise to a pair of long exact
sequences. It turns out that commutativity is actually equivalent to
having such Mayer--Vietoris sequences.

\begin{prop}\label{pr:MV}
  Let $\U,\V\in\Thick\T$ be commuting.  Then there are canonical exact
  sequences
  \[\cdots\lto\Si^{-1}L_{\U\vee\V}\lto L_{\U\wedge\V}\stackrel{(1)}\lto L_\U\oplus
   L_\V\lto L_{\U\vee\V}\lto\Si
     L_{\U\wedge\V}\lto\cdots \tag{$\lambda_{\U,\V}$}
   \] and
 \[\cdots\lto\Si^{-1}\Ga_{\U\vee\V}\lto\Ga_{\U\wedge\V}\stackrel{(2)}\lto\Ga_\U\oplus\Ga_\V\lto\Ga_{\U\vee\V}\lto\Si
     \Ga_{\U\wedge\V}\lto\cdots \tag{$\g_{\U,\V}$}
   \]
   where $(1)$ and $(2)$ are induced by the inclusions
   $\U\wedge\V\subseteq\U$ and $\U\wedge\V\subseteq\V$.
 \end{prop}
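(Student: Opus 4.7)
The plan is to derive each long exact sequence from a pair of distinguished triangles of exact endofunctors of $\Coh\T$ which share a common term, and then splice them via the standard Mayer--Vietoris construction for a homotopy bi-cartesian square.

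For $(\g_{\U,\V})$ I would first apply the exact endofunctors $\Ga_\U$ and $\Ga_{\U\vee\V}$ to the localisation triangle $\Ga_\V\to\id\to L_\V\to\Si\Ga_\V$ of Proposition~\ref{pr:loc-seq}. The commutativity of $\U,\V$ together with Lemma~\ref{le:U-V-central} gives $\Ga_\U\Ga_\V\cong\Ga_{\U\wedge\V}$; Lemma~\ref{le:rules-commute} gives $\Ga_{\U\vee\V}\Ga_\V\cong\Ga_\V$; and Lemma~\ref{le:U-V-central-localisation}(3) gives $\Ga_{\U\vee\V}L_\V\cong\Ga_\U L_\V$. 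These substitutions produce two exact triangles
\[\Ga_{\U\wedge\V}\lto\Ga_\U\lto\Ga_\U L_\V\lto\Si\Ga_{\U\wedge\V}\qquad\text{and}\qquad\Ga_\V\lto\Ga_{\U\vee\V}\lto\Ga_\U L_\V\lto\Si\Ga_\V\]
sharing the third term $\Ga_\U L_\V$. Naturality of $\id\to L_\V$ applied to the canonical map $\Ga_\U\to\Ga_{\U\vee\V}$, combined with the canonical identification $\Ga_{\U\vee\V}L_\V\cong\Ga_\U L_\V$, assembles these into a morphism of triangles from the first to the second which is the identity on the common term; equivalently, the commutative square of canonical maps
\[\begin{tikzcd}
\Ga_{\U\wedge\V}\ar[r]\ar[d]&\Ga_\U\ar[d]\\
\Ga_\V\ar[r]&\Ga_{\U\vee\V}
\end{tikzcd}\]
is homotopy bi-cartesian. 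The standard Mayer--Vietoris principle for such a square then yields a distinguished total triangle
\[\Ga_{\U\wedge\V}\lto\Ga_\U\oplus\Ga_\V\lto\Ga_{\U\vee\V}\lto\Si\Ga_{\U\wedge\V}\]
whose associated long exact sequence of endofunctors is $(\g_{\U,\V})$, with the first map induced by the inclusions $\U\wedge\V\subseteq\U$ and $\U\wedge\V\subseteq\V$ as required.

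For $(\la_{\U,\V})$ I would run the dual argument, applying $L_{\U\wedge\V}$ and $L_\V$ to the localisation triangle $\Ga_\U\to\id\to L_\U\to\Si\Ga_\U$. Lemma~\ref{le:rules-commute} gives $L_{\U\wedge\V}L_\U\cong L_\U$; Lemma~\ref{le:U-V-central} (applied with the roles of $\U$ and $\V$ swapped) gives $L_{\U\wedge\V}\Ga_\U\cong L_\V\Ga_\U$; and Lemma~\ref{le:U-V-central-localisation}(1) gives $L_\V L_\U\cong L_{\U\vee\V}$. The two resulting triangles now share the \emph{first} term $L_\V\Ga_\U$, and splicing the corresponding homotopy bi-cartesian square with entries $L_{\U\wedge\V}, L_\U, L_\V, L_{\U\vee\V}$ yields $(\la_{\U,\V})$.

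The main obstacle I anticipate is justifying the Mayer--Vietoris splicing in the setting of $\End\T$ rather than in a bona fide triangulated category: one must verify that a homotopy bi-cartesian square of exact endofunctors gives rise to a distinguished total triangle $A\to B\oplus C\to D\to\Si A$. Because exact sequences and connecting maps in $\End\T$ are computed pointwise and every identification used is natural, the classical octahedral-axiom proof carries over essentially verbatim, provided one verifies the routine compatibility of the unit $\id\to L_\V$ (respectively counit $\Ga_\U\to\id$) with the canonical identifications $\Ga_{\U\vee\V}L_\V\cong\Ga_\U L_\V$ and $L_{\U\wedge\V}\Ga_\U\cong L_\V\Ga_\U$ supplied by the earlier lemmas.
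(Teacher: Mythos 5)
Your proof is correct and takes essentially the same route as the paper: in both cases one compares two copies of the localisation sequence \eqref{eq:loc-seq} (composed with $L_{\U\wedge\V}\to L_\V$ for $\lambda_{\U,\V}$, resp.\ with $\Ga_\U\to\Ga_{\U\vee\V}$ for $\gamma_{\U,\V}$), uses Lemmas~\ref{le:U-V-central} and \ref{le:U-V-central-localisation} to see that the comparison is an isomorphism on every third term, and splices by the standard Mayer--Vietoris argument for such a bi-cartesian situation (the paper cites \cite[Lemma~10.7.4]{TD2008} for precisely this step). The only cosmetic difference is the side on which you compose the functors, which is immaterial by Lemmas~\ref{le:rules-commute} and \ref{le:central-loc-coloc}.
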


 \begin{proof}
   The exact sequence \eqref{eq:loc-seq} given by $\U$ and composed
   with $L_{\U\wedge\V}\to L_\V$
   yields  a morphisms of  exact sequences.
\[\begin{tikzcd}[column sep = small]
    \cdots\arrow{r}&\Si^{-1}L_\U\arrow{r}\arrow{d}&\Ga_\U
    L_{\U\wedge\V}\arrow{r}\arrow[d, "\sim" labl]&
    L_{\U\wedge\V}\arrow{r}\arrow{d}& L_\U\arrow{r}\arrow{d}&\Si\Ga_\U
    L_{\U\wedge\V}\arrow{r}\arrow[d, "\sim" labl]&\cdots\\
    \cdots\arrow{r} &\Si^{-1}L_{\U\wedge\V}\arrow{r}&\Ga_{\U}L_\V\arrow{r}&L_\V\arrow{r}&
    L_{\U\vee\V}\arrow{r}&\Si\Ga_{\U}L_\V\arrow{r}&\cdots
\end{tikzcd}\] 
Here we use the identification $L_\U L_\V= L_{\U\vee\V}$ from
Lemma~\ref{le:U-V-central-localisation}, and the isomorphisms follow
from Lemma~\ref{le:U-V-central}. Inverting them yields the
connecting morphism $L_{\U\vee\V}\to\Si L_{\U\wedge\V}$, and then a
standard argument turns the diagram into an exact sequence of the form
$\lambda_{\U,\V}$; cf.\ \cite[Lemma~10.7.4]{TD2008}. The proof for the
second sequence is analogous.
\end{proof}

\begin{cor}\label{co:MV}
  For a pair $\U,\V$ in $\Thick\T$ the following are equivalent.
\begin{enumerate}
\item  The pair $\U,\V$ is commuting.
\item There is an exact Mayer--Vietoris sequence $\lambda_{\U,\V}$.
\item For each object $X\in\T$ there is a canonical exact sequence  
\begin{multline*}\cdots\to \Hom_{\T/(\U\wedge\V)}(-,X)\to  \Hom_{\T/\U}(-,X)\oplus
 \Hom_{\T/\V}(-,X)\to\\
 \to\Hom_{\T/(\U\vee\V)}(-,X)\to\Hom_{\T/(\U\wedge\V)}(-,\Si X)\to\cdots.
\end{multline*}
\end{enumerate}
\end{cor}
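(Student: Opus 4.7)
The plan is to prove the cycle (1) $\Rightarrow$ (2) $\Rightarrow$ (3) $\Rightarrow$ (1), exploiting that all the relevant functors preserve filtered colimits and agree on representables with the Hom-functors in the respective quotient categories.

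For (1) $\Rightarrow$ (2), I simply appeal to Proposition~\ref{pr:MV}, which already produces the exact sequence $\lambda_{\U,\V}$ whenever $\U,\V$ commute.

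For (2) $\Rightarrow$ (3), I evaluate the sequence $\lambda_{\U,\V}$ at the representable functor $H_X$ for a given $X\in\T$. Since for any triangulated subcategory $\calS\subseteq\T$ one has $L_\calS H_X=\Hom_{\T/\calS}(-,X)$, and since exactness of a sequence in $\Coh\T$ is tested pointwise (in particular on each $H_X$, which is part of the data defining $\Coh\T$), the resulting sequence is exactly the one in (3). The canonicity of the maps in (3) follows from the canonicity of the connecting maps in $\lambda_{\U,\V}$, induced by the inclusions $\U\wedge\V\subseteq\U,\V\subseteq\U\vee\V$.

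The key step is (3) $\Rightarrow$ (1). The strategy is to specialise $X$ to lie in $\V$. Then $X\in\U\vee\V$ as well, so $L_\V H_X=0$ and $L_{\U\vee\V}H_X=0$, and the long exact sequence from (3) collapses to isomorphisms
\[\Hom_{\T/(\U\wedge\V)}(-,X)\iso \Hom_{\T/\U}(-,X)\]
induced by the inclusion $\U\wedge\V\subseteq\U$. This is precisely condition (5) of Lemma~\ref{le:U-V-central}, so $\U$ and $\V$ commute. (Symmetrically, specialising to $X\in\U$ gives the analogous isomorphism involving $\Hom_{\T/\V}$, confirming the symmetric nature of the commuting condition.)

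I do not expect any serious obstacle here: the real content was already packaged into Proposition~\ref{pr:MV} and Lemma~\ref{le:U-V-central}. The only point that warrants care is making sure that the maps appearing in (3), produced from the Mayer--Vietoris sequence $\lambda_{\U,\V}$ by evaluation at $H_X$, actually coincide with the canonical maps $\Hom_{\T/(\U\wedge\V)}(-,X)\to\Hom_{\T/\U}(-,X)\oplus\Hom_{\T/\V}(-,X)$, etc.; but this is immediate from the construction of the connecting maps as being induced by the inclusions of subcategories.
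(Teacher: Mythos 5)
Your proposal is correct and follows essentially the same route as the paper: (1)~$\Rightarrow$~(2) via Proposition~\ref{pr:MV}, (2)~$\Rightarrow$~(3) by evaluating $\lambda_{\U,\V}$ at $H_X$, and (3)~$\Rightarrow$~(1) by specialising $X$ and reducing to Lemma~\ref{le:U-V-central}(5). The only cosmetic difference is that the paper takes $X\in\U\vee\V$ and invokes Lemma~\ref{le:commuting}(3), whereas you take $X\in\V$ (and $X\in\U$) and inline exactly the argument used to prove that implication of Lemma~\ref{le:commuting}, correctly noting that both directions are needed for the commuting condition.
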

\begin{proof}
  (1) $\Rightarrow$ (2) This follows from Proposition~\ref{pr:MV}.

  (2) $\Rightarrow$ (3) For $X\in\T$ apply  the Mayer--Vietoris sequence $\lambda_{\U,\V}$  to $H_X$.

  (3) $\Rightarrow$ (1) For $X\in\U\vee\V$ the sequence
  $\lambda_{\U,\V}H_X$  yields an
  isomorphism
  \[\Hom_{\T/(\U\wedge\V)}(-,X)\longiso \Hom_{\T/\U}(-,X)\oplus
    \Hom_{\T/\V}(-,X).\] Thus the pair $\U,\V$ is commuting by
  Lemma~\ref{le:commuting}.
\end{proof}

This result reflects the classical situation given by a pair of
subspaces $A,B\subseteq X=A\cup B$ of a topological space $X$.  In
that case the Mayer--Vietoris sequence for a homology theory is
equivalent to an excision property which is symmetric in $A$ and $B$;
cf.\ \cite[10.7]{TD2008}. The triangulated analogue of this is the
isomorphism $L_{\U\wedge\V}\Ga_\V\iso L_\U\Ga_{\U\vee\V}$ from
Lemma~\ref{le:commuting}. Related (though not equivalent) is the
second Noether isomorphism $\V/(\U\wedge\V)\iso(\U\wedge\V)/\U$.

 There is a plethora of Mayer--Vietoris sequences for triangulated
 categories in the literature \cite{BF2007, BIK2008,Ri1997,Th1997},
 including interesting applications. We recover some of them and note
 that all are based (implicitly) on a pair of commuting thick
 subcategories.  A typical example is given by subcategories
 $\U,\V$ satisfying $\Hom_\T(U,V)=0$ and $\Hom_\T(V,U)=0$ for all $U\in\U$ and
 $V\in\V$.

\subsection*{Distributivity}

Recall that a lattice is \emph{distributive} if for all elements
$x,y,z$ there is an equality
\[(x\wedge z)\vee(y\wedge z)=(x\vee y)\wedge z.\]
An equivalent condition is that for all elements
$x,y,z$ there is an equality
\[(x\vee z)\wedge(y\vee z)=(x\wedge y)\vee z.\]

The lattice $\Thick\T$ is rarely distributive. However, we get
distributivity when we restrict to central elements in this
lattice. The proof requires another technical lemma.

\begin{lem}\label{le:fin-dist}
  Let $\U,\V,\W\in\Thick\T$ and suppose $\W\subseteq\U\wedge\V$.
If  $\Ga_{\W}\iso\Ga_\U\Ga_\V$ then $\W=\U\wedge\V$.
\end{lem}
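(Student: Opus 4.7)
The plan is to prove $\U\wedge\V\subseteq\W$ by a pointwise argument: given any object $X\in\U\wedge\V$, I will show that $H_X$ lies in the full subcategory $\Coh\W\subseteq\Coh\T$, and then invoke thickness of $\W$ to conclude $X\in\W$.

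First, since $X\in\U$, the functor $H_X$ belongs to $\Coh\U$, so $\Ga_\U H_X\iso H_X$; likewise $X\in\V$ gives $\Ga_\V H_X\iso H_X$. Composing, $\Ga_\U\Ga_\V H_X\iso H_X$. By the hypothesis $\Ga_\W\iso\Ga_\U\Ga_\V$, this yields $\Ga_\W H_X\iso H_X$, so $H_X\in\Coh\W$ by the characterisation recalled just after Lemma~\ref{le:exact-fun}.

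It remains to deduce $X\in\W$ from $H_X\in\Coh\W$. Writing $H_X$ as the canonical filtered colimit $\colim_{H_W\to H_X, W\in\W}H_W$ and evaluating at $X$, the identity $\id_X\in H_X(X)$ comes from some morphism $\a\colon X\to W$ with $W\in\W$ whose postcomposition with the colimit cone $H_W\to H_X$ hits $\id_X$. Thus $X$ is a retract of $W$, and since $\W$ is thick (hence closed under direct summands), $X\in\W$. This gives $\U\wedge\V\subseteq\W$, and combined with the given reverse inclusion we get equality.

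I do not expect any real obstacle here: the key move is simply to evaluate the abstract identity $\Ga_\W\iso\Ga_\U\Ga_\V$ on representables $H_X$ for $X\in\U\wedge\V$. The only step that might warrant a line of justification is the passage from $H_X\in\Coh\W$ to $X\in\W$, for which the retract argument above (using closure under direct summands of the thick subcategory $\W$) is the standard trick.
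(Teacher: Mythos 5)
Your argument is correct and is essentially the paper's own proof: evaluate the isomorphism $\Ga_\W\iso\Ga_\U\Ga_\V$ on $H_X$ for $X\in\U\wedge\V$ to get $\Ga_\W H_X\cong H_X$, hence $H_X\in\Coh\W$ and $X\in\W$. Your extra retract argument for the step $H_X\in\Coh\W\Rightarrow X\in\W$ (which the paper leaves implicit) is a correct and standard justification.
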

\begin{proof}
Let $X\in \U\wedge\V$. Then $\Ga_\W
H_X\cong\Ga_\U\Ga_\V H_X\cong H_X$. Thus $H_X\in\Coh\W$ and therefore
$X\in\W$.
\end{proof}

\begin{prop}\label{pr:finite-sup}
  Let $\U,\V\in\Thick\T$ be central. Then
  $\U\vee\V$ and   $\U\wedge\V$ are central. Moreover, we have  for all $\W\in\Thick\T$
  \[(\U\wedge \W)\vee(\V\wedge\W)=(\U\vee\V)\wedge\W\] 
and
  \[(\U\vee \W)\wedge(\V\vee\W)=(\U\wedge\V)\vee\W\]
\end{prop}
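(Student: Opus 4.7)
The plan is to prove the four assertions sequentially: $\U\wedge\V$ central, then the first distributive identity, then $\U\vee\V$ central, then the second identity. The key tools will be the $\Ga$/$L$-calculus of Lemmas~\ref{le:rules-commute}--\ref{le:central-res}, the Mayer--Vietoris sequences of Proposition~\ref{pr:MV}, and Lemma~\ref{le:fin-dist}.

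First I would show $\U\wedge\V$ is central by chaining isomorphisms: for any $\W$,
\[\Ga_{\U\wedge\V}\Ga_\W\cong\Ga_\U\Ga_\V\Ga_\W\cong\Ga_\U\Ga_{\V\wedge\W}\cong\Ga_{\U\wedge\V\wedge\W},\]
using commutativity of $\U,\V$, then centrality of $\V$, then centrality of $\U$; the mirror computation yields $\Ga_\W\Ga_{\U\wedge\V}\cong\Ga_{\U\wedge\V\wedge\W}$. By the same trick one also obtains $\Ga_{\U\wedge\W}\Ga_{\V\wedge\W}\cong\Ga_{\U\wedge\V\wedge\W}$ (using centrality of $\V$ and Lemma~\ref{le:rules-commute}), so the pair $\U\wedge\W,\V\wedge\W$ is commuting. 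With this in hand the first distributive identity follows: I post-compose the Mayer--Vietoris $\g_{\U,\V}$ with $\Ga_\W$ and identify all $\Ga$-compositions via centrality to obtain an exact sequence
\[\Ga_{\U\wedge\V\wedge\W}\to\Ga_{\U\wedge\W}\oplus\Ga_{\V\wedge\W}\to\Ga_{\U\vee\V}\Ga_\W\to\Si\Ga_{\U\wedge\V\wedge\W},\]
which shares its first two terms (and connecting maps) with $\g_{\U\wedge\W,\V\wedge\W}$. The five-lemma identifies $\Ga_{\U\vee\V}\Ga_\W\cong\Ga_\X$ with $\X:=(\U\wedge\W)\vee(\V\wedge\W)$, and Lemma~\ref{le:fin-dist} applied to the inclusion $\X\subseteq(\U\vee\V)\wedge\W$ yields the equality.

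Centrality of $\U\vee\V$ drops out by performing the mirror of the previous step (pre-composing $\g_{\U,\V}$ with $\Ga_\W$): both $\Ga_{\U\vee\V}\Ga_\W$ and $\Ga_\W\Ga_{\U\vee\V}$ are then isomorphic to $\Ga_{(\U\vee\V)\wedge\W}$. For the final identity, set $E:=(\U\wedge\V)\vee\W$ and $D:=(\U\vee\W)\wedge(\V\vee\W)$; the inclusion $E\subseteq D$ is immediate. Composing $\lambda_{\U,\V}$ with $L_\W$ and invoking Lemma~\ref{le:U-V-central-localisation}(1) for each of the pairs $(\U,\W),(\V,\W),(\U\wedge\V,\W),(\U\vee\V,\W)$---all commuting by what has been established---produces the exact sequence
\[\cdots\to L_E\to L_{\U\vee\W}\oplus L_{\V\vee\W}\to L_{\U\vee\V\vee\W}\to\Si L_E\to\cdots.\]
Evaluating at $H_X$ for any $X\in D$ kills the three terms to the right of $L_E$ (since $X$ lies in $\U\vee\W$, $\V\vee\W$, and $\U\vee\V\vee\W$), forcing $L_E H_X=0$ and hence $X\in E$, so $D\subseteq E$. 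The main technical obstacle is the verification that $\U\wedge\W$ and $\V\wedge\W$ are commuting, which unlocks the five-lemma comparison in the second step; the final evaluation argument is a pleasant shortcut that sidesteps the more laborious question of whether $\U\vee\W$ and $\V\vee\W$ commute.
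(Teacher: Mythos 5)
Your proof is correct, and for the first three assertions it follows essentially the paper's route: a chain of canonical isomorphisms shows $\U\wedge\V$ is central, and the first distributive law together with the centrality of $\U\vee\V$ comes from comparing the Mayer--Vietoris sequence $\g_{\U\wedge\W,\V\wedge\W}$ with $\g_{\U,\V}$ composed with $\Ga_\W$ on either side, then applying the five lemma and Lemma~\ref{le:fin-dist}. (The paper packages this as one three-row diagram and gets the commutativity of the pair $\U\wedge\W,\V\wedge\W$ from Lemma~\ref{le:central-res} instead of your direct computation with Lemma~\ref{le:rules-commute}; these are cosmetic differences. As in the paper, one should in a full write-up check that the comparison map into the third column, $\Ga_{(\U\wedge\W)\vee(\V\wedge\W)}\to\Ga_{\U\vee\V}\Ga_\W$, is the canonical one compatible with the connecting morphisms.) Where you genuinely diverge is the second identity: the paper's proof ends with the remark that the central elements form a distributive sublattice and never addresses $(\U\vee\W)\wedge(\V\vee\W)=(\U\wedge\V)\vee\W$ for an arbitrary, possibly non-central $\W$ --- and the standard lattice-theoretic derivation of the second distributive law from the first does not apply verbatim here, since it would substitute non-central elements into the slots reserved for $\U$ and $\V$. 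Your dual argument --- composing $\lambda_{\U,\V}$ with $L_\W$, identifying terms via Lemma~\ref{le:U-V-central-localisation} for the four commuting pairs, and evaluating at $H_X$ for $X\in(\U\vee\W)\wedge(\V\vee\W)$ to force $L_{(\U\wedge\V)\vee\W}H_X=0$ --- fills this gap cleanly and, as you observe, sidesteps the question of whether $\U\vee\W$ and $\V\vee\W$ commute. This makes your treatment of the last assertion more complete than the paper's.
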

\begin{proof}
It is clear that $\U\wedge\V$ is central since
\[\Ga_{\U\wedge\V}\Ga_\W\cong \Ga_{\U}\Ga_\V\Ga_\W\cong \Ga_\W\Ga_{\U}\Ga_\V\cong
  \Ga_\W\Ga_{\U\wedge\V}.\] 
Now we show that $\U\vee\V$ is central and combine this with the proof
  of the first identity.  Consider the following Mayer--Vietoris
  sequences, where the middle one uses that $\U\wedge\W$ and
  $\V\wedge\W$ are commuting by Lemma~\ref{le:central-res}.
  \[\begin{tikzcd}
      \cdots\arrow{r}&\Ga_{\U\wedge\V}\Ga_\W\arrow{r}&\Ga_\U\Ga_\W\oplus\Ga_V\Ga_\W\arrow{r}&
      \Ga_{\U\vee\V}\Ga_\W\arrow{r}&\cdots\\
      \cdots\arrow{r}&\Ga_{\U\wedge\V\wedge\W}\arrow{r}\arrow[u, "\sim" labl]\arrow[d, "\sim" labl]&
      \Ga_{\U\wedge\W}\oplus\Ga_{\V\wedge\W}\arrow{r}\arrow[u, "\sim" labl]\arrow[d, "\sim" labl]&
      \Ga_{(\U\wedge
        \W)\vee(\V\wedge\W)}\arrow{r}\arrow{u}\arrow{d}&\cdots\\
      \cdots\arrow{r}&\Ga_\W \Ga_{\U\wedge\V}\arrow{r}&\Ga_\W \Ga_\U\oplus\Ga_\W\Ga_V\arrow{r}&
      \Ga_\W \Ga_{\U\vee\V}\arrow{r}&\cdots
    \end{tikzcd}\] The vertical isomorphisms are clear since $\U$ and
  $\V$ central. The inclusion
  \[(\U\wedge \W)\vee(\V\wedge\W)\subseteq (\U\vee\V)\wedge\W\] is
  automatic. Thus we can apply Lemma~\ref{le:fin-dist}, and then the
  five lemma yields the assertion for $\U\vee\V$.  We conclude that the central
  elements form a sublattice of $\Thick\T$ which is distributive.
\end{proof}

We extend Proposition~\ref{pr:finite-sup} and establish the same
assertions for arbitrary joins, using  some standard arguments.

\begin{prop}\label{pr:frame}
  Let  $(\U_i)_{i\in I}$ be a family of central elements in $\Thick\T$. Then
  $\bigvee_{i\in I}\U_i$ is central and
  \[\bigvee_{i\in I}(\U_i\wedge \V)=\big(\bigvee_{i\in
      I}\U_i\big)\wedge\V\]
  for all $\V\in\Thick\T$.
\end{prop}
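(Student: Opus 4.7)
The plan is to reduce to the finite case handled by Proposition~\ref{pr:finite-sup} by writing an arbitrary join as a directed union of finite joins. For any family $(\U_i)_{i\in I}$ of thick subcategories, set $\U_F:=\bigvee_{i\in F}\U_i$ for each finite $F\subseteq I$. The collection $\{\U_F\}$ is directed under inclusion, and the crucial (standard) observation is that
\[\bigvee_{i\in I}\U_i \;=\; \bigcup_{F\subseteq I \text{ finite}}\U_F,\]
since the thick hull of $\bigcup_i\U_i$ is constructed by finitary operations — shifts, cones, and direct summands — so each of its objects uses only finitely many $\U_i$.

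To show centrality of $\bigvee_{i\in I}\U_i$, I will check condition~(1) of Lemma~\ref{le:U-V-central}. By an induction on $|F|$ using Proposition~\ref{pr:finite-sup}, each $\U_F$ is central. Now given $\V\in\Thick\T$ and a morphism $f\col U\to V$ with $U\in\bigvee_{i\in I}\U_i$ and $V\in\V$, pick a finite $F$ with $U\in\U_F$. Since $\U_F$ is central, $f$ factors through an object of $\U_F\wedge\V\subseteq (\bigvee_{i\in I}\U_i)\wedge\V$. The reverse direction (a morphism $V\to U$) is entirely symmetric in $U$, so both halves of the commuting condition of Definition~\ref{de:centre} are verified.

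For the distributivity identity, the inclusion $\bigvee_{i\in I}(\U_i\wedge\V)\subseteq(\bigvee_{i\in I}\U_i)\wedge\V$ is automatic. Conversely, any $X\in(\bigvee_{i\in I}\U_i)\wedge\V$ lies in $\U_F$ for some finite $F$ by the reduction above, hence in $\U_F\wedge\V$. Iterating the finite distributivity of Proposition~\ref{pr:finite-sup} (valid because all $\U_i$, and hence their finite joins, are central) gives
\[\U_F\wedge\V \;=\; \bigvee_{i\in F}(\U_i\wedge\V)\;\subseteq\;\bigvee_{i\in I}(\U_i\wedge\V),\]
which completes the argument. The only potentially subtle point is the reduction $\bigvee_{i\in I}\U_i = \bigcup_F\U_F$; everything else is a straightforward bootstrap from the finite case.
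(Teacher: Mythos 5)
Your proof is correct and follows essentially the same route as the paper: reduce to finite joins $\U_F$, which are central by Proposition~\ref{pr:finite-sup}, and use that every object of $\bigvee_{i\in I}\U_i$ already lies in some $\U_F$ to verify both the commuting condition and the reverse inclusion in the distributivity identity. Your explicit justification of $\bigvee_{i\in I}\U_i=\bigcup_F\U_F$ via the finitary construction of thick hulls is a point the paper leaves implicit, but otherwise the arguments coincide.
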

\begin{proof}
  Set $\U=\bigvee_{i\in I}\U_i$. The element
  $\U_J=\bigvee_{i\in J}\U_i$ is central for all finite $J\subseteq I$
  by Proposition~\ref{pr:finite-sup}.  Now choose objects $U\in\U$ and
  $V\in\V$. Then $U\in\U_J$ for some finite $J\subseteq I$ and
  therefore any morphism between $U$ and $V$ factors through an object
  in $\U_J\wedge \V\subseteq \U\wedge \V$. Thus $\U$ is central.

  The inclusion $\bigvee_{i\in I}(\U_i\wedge \V)\subseteq\U\wedge\V$
is clear. For the other inclusion we use again
Proposition~\ref{pr:finite-sup}. In fact any object
$X\in\U\wedge \V$ belongs to $\U_J\wedge V=\bigvee_{i\in J}(\U_i\wedge \V)$ for some
  finite $J\subseteq I$, and therefore $X$ belongs to  $\bigvee_{i\in I}(\U_i\wedge \V)$.
\end{proof}

\subsection*{The frame of central subcategories}

Recall that a lattice is a \emph{frame} if it is complete (i.e.\
arbitrary joins exist) and the following \emph{infinite distributivity}
holds: for all elements $(x_i)_{i\in I}$ and $y$ there is an equality
\[\bigvee_{i\in I}(x_i\wedge y)=\big(\bigvee_{i\in I}x_i\big)\wedge
  y.\] A morphism of frames is a morphism of lattices that preserves
finite meets and arbitrary joins.

To a frame $F$ corresponds its space
$\Pt F$ of points.  A \emph{point} is by definition a frame morphisms
$p\colon F\to\{0,1\}$ and the open sets are of the form
\[U(a):=\{p\in\Pt F\mid p(a)=1\}\qquad (a\in F).\] To a topological
space $X$ we can associate its frame of open subsets $\Omega X$, and
this yields an adjoint pair of functors
\[\begin{tikzcd}[column sep=huge]
\mathbf{Frm} \ar[r,shift left,"\Pt"] & \mathbf{Top}^\op \ar[l,shift
left,"\Omega"].
\end{tikzcd}\] A frame $F$ is \emph{spatial} if the assignment
$a\mapsto U(a)$ gives an isomorphism $F\iso\Omega\Pt F$.  It is
convenient to identify the points in $\Pt F$ with the \emph{prime
  elements} of $F$ via the assignment
\[\Pt F\ni p\longmapsto \bigvee_{p(a)=0} a\in F.\]

The central thick subcategories form a
sublattice \[Z(\Thick\T)\subseteq \Thick\T\] by
Proposition~\ref{pr:finite-sup}.  This lattice is in fact a spatial
frame.

\begin{cor}\label{co:spatial}
The central thick subcategories form a spatial frame, and therefore
the lattice of central thick subcategories identifies with the lattice
of open subsets of a sober topological space.
\end{cor}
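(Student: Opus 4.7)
Propositions~\ref{pr:finite-sup} and \ref{pr:frame} already show that $Z(\Thick\T)$ is closed under finite meets, arbitrary joins, and satisfies the infinite distributive law, hence is a frame. So the real content of the corollary is spatiality; the final clause will follow immediately from the adjunction $\Pt\dashv\Omega$, since a spatial frame identifies by definition with the open-set frame of its sober space of points.

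The plan for spatiality is to manufacture enough prime elements by a Zorn-type argument. Given $\V\in Z(\Thick\T)$ and an object $X\in\T$ with $X\notin\V$, consider the poset $S$ of central thick subcategories $\W$ satisfying $\V\subseteq\W$ and $X\notin\W$. It is nonempty, and every chain in it has an upper bound still in $S$: the plain union of a chain of thick subcategories is again thick, it is central by Proposition~\ref{pr:frame}, and it visibly avoids $X$. Zorn's lemma then produces a maximal element $\P\in S$.

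The key verification is that $\P$ is a \emph{prime} element of the frame $Z(\Thick\T)$. Suppose $\U_1,\U_2\in Z(\Thick\T)$ satisfy $\U_1\wedge\U_2\subseteq\P$ while $\U_i\not\subseteq\P$ for $i=1,2$. Then each $\P\vee\U_i$ is central and strictly larger than $\P$, so maximality forces $X\in\P\vee\U_i$. The finite distributive identity of Proposition~\ref{pr:finite-sup} then yields
\[(\P\vee\U_1)\wedge(\P\vee\U_2)=\P\vee(\U_1\wedge\U_2)=\P,\]
placing $X$ in $\P$, a contradiction. Hence $\P$ is prime, and we denote it $\P_{X,\V}$.

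With primes in hand spatiality is immediate: for every strict inclusion $\V\subsetneq\U$ in $Z(\Thick\T)$ and any witness $X\in\U\setminus\V$, the prime $\P_{X,\V}$ satisfies $\V\subseteq\P_{X,\V}$ and $\U\not\subseteq\P_{X,\V}$, so every element is the meet of the primes above it. I do not anticipate a serious obstacle beyond the Zorn step itself; the entire argument is essentially the classical distributive-lattice-to-spectrum construction, and the genuine work has been done already in Propositions~\ref{pr:finite-sup} and \ref{pr:frame} to ensure that both centrality and the requisite distributivity survive all the lattice operations in play.
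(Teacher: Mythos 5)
Your proposal is correct and follows the same route as the paper, which simply invokes the Zorn's lemma construction of prime elements (citing the proof of Theorem~4.7 in \cite{GS2022}) after noting that infinite distributivity comes from Proposition~\ref{pr:frame}. You have merely written out the details the paper delegates to that reference, and each step checks out: unions of chains of central thick subcategories are central joins by Proposition~\ref{pr:frame}, and primality of the maximal element follows from the finite distributive law of Proposition~\ref{pr:finite-sup} exactly as you argue.
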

\begin{proof}
  The infinite distributivity follows from Proposition~\ref{pr:frame}.
  An application of Zorn's lemma produces sufficiently many prime
  elements such that the frame is spatial; see the proof of
  Theorem~4.7 in \cite{GS2022}.
\end{proof}

The assignment $\T\mapsto Z(\Thick\T)$ is functorial in the following sense.

\begin{cor}
  For any triangulated subcategory $\calS\subseteq\T$ the canonical functors
  $\calS\to\T$ and $\T\to\T/\calS$ induce frame morphisms
  \[Z(\Thick\T)\lto Z(\Thick\calS),\qquad \U\mapsto
    \U\wedge\calS\]
  and
   \[Z(\Thick\T)\lto Z(\Thick\T/\calS),\qquad \U\mapsto (\U\vee\calS)/\calS.\]
\end{cor}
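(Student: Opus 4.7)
The plan is to verify, for each of the two assignments, that the image lies in the target centre (well-definedness) and that the two frame-morphism axioms hold: preservation of finite meets (binary meets together with the top element) and preservation of arbitrary joins (which includes the bottom as the empty join). Well-definedness is immediate from Lemma~\ref{le:central-res}: part~(1) yields that $\U\wedge\calS$ is central in $\Thick\calS$ whenever $\U$ is central in $\Thick\T$, and part~(2) yields that $(\U\vee\calS)/\calS$ is central in $\Thick(\T/\calS)$.

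For the first map $\U\mapsto \U\wedge\calS$, preservation of binary meets and of the top $\T$ is formal, using only associativity and idempotency of the meet and the fact that $\T\wedge\calS=\calS$. The substantive point is preservation of arbitrary joins: for any family $(\U_i)_{i\in I}$ of central elements one needs
\[\Bigl(\bigvee_{i\in I}\U_i\Bigr)\wedge\calS=\bigvee_{i\in I}(\U_i\wedge\calS),\]
which is precisely the infinite distributivity established in Proposition~\ref{pr:frame}.

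For the second map $\U\mapsto (\U\vee\calS)/\calS$, I would factor the verification through the standard bijection between $\Thick(\T/\calS)$ and the interval $\{\W\in\Thick\T\mid \calS\subseteq\W\}$, under which meets correspond to intersections and joins to joins. It therefore suffices to check the identities in $\Thick\T$ for the operator $\U\mapsto \U\vee\calS$. Preservation of the top and of arbitrary joins is then formal, the latter by associativity of $\vee$. The substantive step is binary meets: here I would apply the second distributivity identity in Proposition~\ref{pr:finite-sup} with $\W=\calS$, which for central $\U_1,\U_2$ gives
\[(\U_1\vee\calS)\wedge(\U_2\vee\calS)=(\U_1\wedge\U_2)\vee\calS,\]
and this descends to the required identity in $\Thick(\T/\calS)$. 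This is the main obstacle in the argument: it is the only step where centrality of $\U_1,\U_2$ is truly essential, since $\Thick\T$ need not be distributive, and it rests on the less obvious of the two distributivity laws established in Proposition~\ref{pr:finite-sup}.
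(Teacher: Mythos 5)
Your proposal is correct and follows the paper's proof exactly: well-definedness via Lemma~\ref{le:central-res}, and preservation of finite meets and arbitrary joins via Propositions~\ref{pr:finite-sup} and \ref{pr:frame}. You merely make explicit which distributivity identity carries each map (infinite distributivity for $\U\mapsto\U\wedge\calS$, the second finite identity with $\W=\calS$ for $\U\mapsto(\U\vee\calS)/\calS$), which the paper leaves implicit.
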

\begin{proof}
  The maps are well defined by Lemma~\ref{le:central-res}. They
  preserve finite meets and arbitrary joins by
  Propositions~\ref{pr:finite-sup} and \ref{pr:frame}.
\end{proof}

\subsection*{Central support}

For an essentially small triangulated category $\T$ let
\[\Spc_\cent\T:=\Pt Z(\Thick\T)\]
denote the space associated with the frame of central thick subcategories of
$\T$.  For any object $X\in\T$ its \emph{central support} is the set
of prime elements $\P$ in $Z(\Thick\T)$ not containing $X$, so
\begin{equation}\label{eq:supp}
  \supp_\cent X:=\{\P\in \Spc_\cent\T\mid X\not\in\P\}.
\end{equation}
For a class of objects $\X\subseteq\T$ we set 
\[\supp_\cent \X:=\bigcup_{X\in\X}\supp_\cent X,\]
which equals the set of primes $\P$ such that $\X\not\subseteq\P$.

\begin{cor}\label{co:supp-bijection}
  \begin{enumerate}
\item  The assignment $\U\mapsto \supp_\cent \U$ induces an isomorphism
  between $Z(\Thick\T)$ and the frame of open subsets of
  $\Spc_\cent\T$.
\item For any pair $\U,\V$ in $\Thick\T$ there is a Mayer--Vietoris
  sequence $\lambda_{\U,\V}$ provided at least one of $\U$ and $\V$
  is central.
\end{enumerate}
\end{cor}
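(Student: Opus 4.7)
The plan is to treat the two parts separately, both being essentially bookkeeping on top of results already in place.

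For part (1), the key input is Corollary~\ref{co:spatial}, which says $Z(\Thick\T)$ is a spatial frame; by definition of \emph{spatial}, the map $\U\mapsto U(\U)=\{p\in\Pt Z(\Thick\T)\mid p(\U)=1\}$ is a lattice isomorphism onto $\Omega\Pt Z(\Thick\T)$. So the only content is to check that $U(\U)$ coincides with $\supp_\cent\U$ after identifying a point $p$ with the prime element $\P=\bigvee_{p(a)=0}a$. Under this identification one has $p(\U)=0$ iff $\U\le \P$, equivalently $p(\U)=1$ iff $\U\not\subseteq\P$. Hence $U(\U)=\{\P\in\Spc_\cent\T\mid \U\not\subseteq\P\}$, which is exactly $\supp_\cent\U$ by \eqref{eq:supp} (taking the union defining $\supp_\cent\U$ over $X\in\U$, since $\U\not\subseteq\P$ iff some $X\in\U$ fails to lie in $\P$).

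For part (2), I appeal directly to Definition~\ref{de:centre}: if $\U$ is central then by definition the pair $\U,\V$ is commuting for every $\V\in\Thick\T$ (and the roles are symmetric, so the same conclusion holds if instead $\V$ is central). Proposition~\ref{pr:MV} then produces the Mayer--Vietoris sequence $\lambda_{\U,\V}$.

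There is no real obstacle here; the only subtlety is getting the direction of the point/prime correspondence right in (1), and noting in (2) that commutativity of the pair $\U,\V$ is symmetric, so it suffices that either member be central.
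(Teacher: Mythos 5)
Your argument is correct and matches the paper's proof, which simply notes that the corollary is a reformulation of Corollaries~\ref{co:MV} and \ref{co:spatial}; your extra care in checking that $U(\U)=\{\P\mid \U\not\subseteq\P\}=\supp_\cent\U$ under the point/prime identification, and that commutativity is symmetric, just makes explicit what the paper leaves implicit.
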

\begin{proof}
  This is a reformulation of Corollaries~\ref{co:MV} and \ref{co:spatial}.
\end{proof}

\section{Examples}\label{se:examples}

Having introduced for any triangulated category the
lattice of central thick subcategories, it is now time to look at examples.

\subsection*{Tensor triangulated categories}
Let $\T=(\T,\otimes,\one)$ be a tensor triangulated category, i.e.\ a
triangulated category equipped with a symmetric monoidal structure
which is exact in each variable. Then $\Coh\T$ inherits a symmetric
monoidal structure which is exact and preserves filtered colimit in
each variable by setting $H_X\otimes H_Y=H_{X\otimes Y}$ for all $X,Y$
in $\T$. The category $\T$ is called \emph{rigid} if there is an exact
functor $D\colon \T^\op\to\T$ and a natural isomorphism
\[\Hom_\T(X\otimes Y,Z)\cong\Hom_\T(Y,DX\otimes Z)\]
for all $X,Y,Z\in\T$. Note that in this case $X\iso D^2X$ for all $X\in\T$.

\begin{lem}\label{le:tensor-coh-ideal}
  Let $\T$ be a rigid tensor triangulated category and $\U\subseteq\T$
  a thick tensor ideal. Then $\Coh\U$ and $\Coh\T/\U$ are tensor
  ideals of $\Coh\T$.
\end{lem}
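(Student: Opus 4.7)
The plan is to handle the two claims in sequence: the first is a quick colimit argument, while the second requires rigidity in order to produce a duality adjunction.

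For $\Coh\U$, I would write any $F\in\Coh\U$ as a filtered colimit $\colim_i H_{U_i}$ with $U_i\in\U$ and any $G\in\Coh\T$ as $\colim_j H_{X_j}$, using Lemma~\ref{le:flat}. Since $\otimes$ preserves filtered colimits in each variable, $F\otimes G$ is the filtered colimit of $H_{U_i}\otimes H_{X_j}=H_{U_i\otimes X_j}$. Each $U_i\otimes X_j$ lies in $\U$ because $\U$ is a tensor ideal of $\T$, so each term is in $\Coh\U$; as $\Coh\U\subseteq\Coh\T$ is closed under filtered colimits (again Lemma~\ref{le:flat}), we get $F\otimes G\in\Coh\U$.

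For $\Coh\T/\U$, I would start from the characterization
\[
G\in\Coh\T/\U \iff G(U)=0 \text{ for every } U\in\U.
\]
This follows because, writing $i\colon\U\hookrightarrow\T$ for the inclusion, one has $\Ga_\U=i^*i_*$, and $i^*$ is fully faithful by Lemma~\ref{le:exact-fun}(3); hence $\Ga_\U G=0$ if and only if $i_*G=0$, i.e.\ the restriction of $G$ to $\U$ vanishes. Now, given $G\in\Coh\T/\U$ and $F=\colim_i H_{Y_i}$ in $\Coh\T$, the task is to show $(F\otimes G)(U)=0$ for all $U\in\U$. Since filtered colimits in $\Coh\T$ are computed pointwise and are preserved by $\otimes$, it suffices to handle $(H_Y\otimes G)(U)$ for a single representable $H_Y$. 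Here rigidity supplies an adjunction $H_Y\otimes-\dashv H_{DY}\otimes-$ on $\Coh\T$: on representable pairs it is the rigid isomorphism $\Hom_\T(X\otimes Y,Z)\cong\Hom_\T(X,DY\otimes Z)$, and both sides extend to arbitrary $F,G\in\Coh\T$ because $\otimes$ preserves filtered colimits in each variable while $\Hom$ turns them into limits. Combining this with Yoneda yields
\[
(H_Y\otimes G)(U)=\Hom(H_U,H_Y\otimes G)\cong\Hom(H_{U\otimes DY},G)=G(U\otimes DY),
\]
which vanishes because $U\otimes DY\in\U$.

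The only real obstacle is the bookkeeping needed to extend the adjunction $H_Y\otimes-\dashv H_{DY}\otimes-$ from representables (where it is immediate from rigidity in $\T$) to all of $\Coh\T$; once that is in place, everything else is a formal manipulation of filtered colimits and Yoneda.
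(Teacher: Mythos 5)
Your proof is correct and follows essentially the same route as the paper's: the first claim via filtered colimits of representables from $\U$, and the second via the rigidity isomorphism $(H_Y\otimes G)(U)\cong G(DY\otimes U)$ together with the characterisation of $\Coh\T/\U$ as the functors vanishing on $\U$. The "bookkeeping" you flag is harmless, since you only need the adjunction isomorphism against representables $H_U$ in the first variable, where it follows at once from the representable case because both sides commute with filtered colimits in $G$.
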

\begin{proof}
  We begin with $\Coh\U$. Let $E,F\in\Coh\T$ written as filtered
  colimits $E=\colim_i H_{X_i}$ and $F=\colim_j H_{X_j}$. If
  $F\in\Coh\U$, then we may assume $X_j\in\U$ for all $j$. Thus
  $H_{X_i}\otimes F\cong\colim_j H_{X_i\otimes X_j}$ belongs to
  $\Coh\U$ for all $i$ because $\U$ is a tensor ideal, and therefore
  $E\otimes F\cong\colim_i (H_{X_i}\otimes F)$ is in $\Coh\U$.

Now consider $\Coh\T/\U$, where rigidity is used as follows.  Let $F\in \Coh\T/\U$ and
$U,X\in\T$. Then we have
  \[(H_X\otimes F)(U)\cong\Hom(H_U,H_X\otimes F)\cong\Hom(H_{DX\otimes
    U},F)\cong F(DX\otimes U)=0\]
when $U\in\U$. For an arbitrary $E=\colim_i H_{X_i}$ in $\Coh\T$ we
conclude that $E\otimes F$ lies in $\Coh\T/\U$, since
\[(E\otimes F)(U)\cong\colim_i (H_{X_i}\otimes F)(U)=0\qquad\text{for}\qquad U\in\U.\qedhere\]
\end{proof}

\begin{prop}\label{pr:tensor-central}
  Let $\T$ be a rigid tensor triangulated category.  Then any pair of
  thick tensor ideals is commuting.
\end{prop}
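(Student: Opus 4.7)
The plan is to verify the factorisation property in condition~(1) of Lemma~\ref{le:U-V-central} (and its analogue with $\U$ and $\V$ swapped), since Definition~\ref{de:centre} identifies commutativity with precisely this bilateral factorisation condition. Thus the task reduces to showing that any morphism $U\to V$ or $V\to U$ with $U\in\U$ and $V\in\V$ factors through some object of $\U\wedge\V$.

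For $f\col U\to V$, I would exploit rigidity in the standard way: let $\tilde f\col\one\to DU\otimes V$ be the image of $f$ under the adjunction isomorphism $\Hom_\T(U,V)\iso\Hom_\T(\one,DU\otimes V)$, and let $\e\col U\otimes DU\to\one$ denote the counit. A triangle identity then expresses $f$ as
\[U\iso U\otimes\one\xto{\id_U\otimes\tilde f}U\otimes DU\otimes V\xto{\e\otimes\id_V}\one\otimes V\iso V,\]
so $f$ factors through $U\otimes DU\otimes V$. This middle object lies in $\U$ because $\U$ is a tensor ideal containing $U$, and in $\V$ because $\V$ is a tensor ideal containing $V$; hence in $\U\wedge\V$. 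A symmetric argument factors any $g\col V\to U$ through $V\otimes DV\otimes U\in\U\wedge\V$.

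Invoking Lemma~\ref{le:U-V-central}~(1)$\Lra$(2) in both directions now yields $\Ga_{\U\wedge\V}\iso\Ga_\U\Ga_\V$ and $\Ga_{\U\wedge\V}\iso\Ga_\V\Ga_\U$, so $\U,\V$ are commuting in the sense of Definition~\ref{de:centre}. I do not see any real obstacle in this argument: once the rigidity-driven factorisation is in place, everything else is immediate from the definition of a tensor ideal. The only point worth checking is that the object through which one factors truly lies in both ideals, and this is visible from the tensor-factor $U$ on one side and $V$ on the other.
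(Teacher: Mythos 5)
Your argument is correct and coincides with the second of the two proofs given in the paper (the one attributed to Vashaw): both express $f\col U\to V$ via the zig-zag identity as a composite through a three-fold tensor product ($U\otimes DU\otimes V$ in your version, $V\otimes DU\otimes U$ in the paper's), which lies in $\U\wedge\V$ because each factor ideal absorbs tensoring. The paper additionally records a first proof via the tensor structure on $\Coh\T$ and the isomorphisms $\Ga_\U\cong(\Ga_\U H_\one)\otimes-$, but your route is exactly the more direct one the paper also endorses.
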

\begin{proof}
  We provide two proofs; the second one is due to Kent Vashaw
  \cite{Va2023} and does not use the commutativity of the tensor
  product.
  
  Let $\U,\V\in\Thick\T$ be tensor ideals. From
  Lemma~\ref{le:tensor-coh-ideal} we know that $\Coh\U$ and
  $\Coh\T/\U$ are tensor ideals of $\Coh\T$.  From this we deduce
  isomorphisms
  \[\Ga_\U\cong (\Ga_\U H_\one)\otimes -\qquad\text{and}\qquad
    L_\U\cong (L_\U H_\one)\otimes - \] as follows.  Consider for
  $F\in\Coh\T$ the exact sequence
  \[\cdots\lto (\Ga_\U H_\one)\otimes F\lto F\lto (L_\U H_\one)\otimes F\lto\cdots\]
  and apply  $\Ga_\U$. We have
\[\Ga_\U((\Ga_\U H_\one)\otimes F)\cong (\Ga_\U H_\one)\otimes F\qquad\text{and}\qquad
  \Ga_\U((L_\U H_\one)\otimes F)=0\] because
\[(\Ga_\U H_\one)\otimes F\in\Coh\U \qquad\text{and}\qquad(L_\U
  H_\one)\otimes F\in\Coh\T/\U\] by the first part of this proof. Thus
$(\Ga_\U H_\one)\otimes F\iso \Ga_\U F$, and the proof of the
isomorphism $(L_\U H_\one)\otimes F\iso L_\U F$ is analogous. Now the
commutativity $\Ga_\U\Ga_\V\cong \Ga_\V\Ga_\U$ follows.

The second proof is more direct. Given $U\in\U$ and $V\in\V$, any
morphism $U\to V$ admits a factorisation
\[U\to U\otimes DU\otimes U\to V\otimes DU\otimes U\to V\] with
$V\otimes DU\otimes U$ in $\U\wedge\V$.  By symmetry the same holds
for a morphism $V\to U$. Thus the pair $\U,\V$ is commuting.
\end{proof}

\begin{rem}
  (1) In Example~\ref{ex:A2} one finds a pair of thick tensor ideals
  which is not commuting.

  (2) The second proof shows that the proposition remains true when
  the tensor product is not commutative \cite{Va2023}.

  (3) For a predecessor of the proposition, see Proposition~6.5 in \cite{BKSS2020}.
\end{rem}
  
The following immediate consequence explains why in several
interesting examples all thick subcategories are central. This
includes the category of perfect complexes over a commutative ring or
the stable homotopy category of finite spectra.

\begin{cor}
  If a tensor triangulated category is generated by its tensor
  identity then all thick subcategories are central.
\end{cor}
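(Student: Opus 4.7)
The plan is to reduce the statement to Proposition~\ref{pr:tensor-central} by showing that under the hypothesis every thick subcategory is automatically a thick tensor ideal. Once this is established, any pair of thick subcategories is a pair of tensor ideals, hence commuting by that proposition, so every thick subcategory is central by Definition~\ref{de:centre}.

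First I would fix a thick subcategory $\U\subseteq\T$ and consider the class
\[
\X:=\{X\in\T\mid X\otimes U\in\U\text{ for all }U\in\U\}.
\]
The key step is to verify that $\X$ is a thick subcategory of $\T$. This is routine: since tensoring with a fixed object $U\in\U$ is exact in each variable, the class of $X$ with $X\otimes U\in\U$ is thick; intersecting over all $U\in\U$ preserves this.

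Next I would observe that $\one\in\X$, because $\one\otimes U\cong U\in\U$ for every $U\in\U$. Since by hypothesis $\T=\thick(\one)$ and $\X$ is thick containing $\one$, we conclude $\X=\T$. This exactly says that $\U$ is a thick tensor ideal.

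Finally, applying Proposition~\ref{pr:tensor-central} to the pair $\U,\V$ of thick tensor ideals (for any other thick subcategory $\V$, which is also a tensor ideal by the same argument) shows that $\U$ and $\V$ are commuting. Hence $\U$ is central in the sense of Definition~\ref{de:centre}, which completes the proof. There is essentially no obstacle here: the whole argument hinges on the elementary fact that a thick subcategory containing $\one$ in a category generated by $\one$ must be everything, which is precisely the content of the hypothesis.
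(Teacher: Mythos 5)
Your first step is correct and is exactly the paper's (implicit) first step: the class $\X=\{X\in\T\mid X\otimes U\in\U\text{ for all }U\in\U\}$ is thick, contains $\one$, and hence equals $\T=\thick\one$, so every thick subcategory is a tensor ideal. The gap is in the second step. Proposition~\ref{pr:tensor-central} is stated only for \emph{rigid} tensor triangulated categories, and the corollary does not assume rigidity (the definition of a tensor triangulated category in this paper does not include rigidity, and the remark following the corollary makes clear that rigidity is an extra hypothesis). Rigidity enters Proposition~\ref{pr:tensor-central} through Lemma~\ref{le:tensor-coh-ideal}, where the duality $D$ is used to show that $\Coh\T/\U$ is a tensor ideal of $\Coh\T$. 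So you cannot simply cite the proposition as a black box.

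The repair is the one the paper uses: apply the \emph{argument} of the first proof of Proposition~\ref{pr:tensor-central} rather than its statement. Concretely, one checks directly that $\Coh\U$ and $\Coh\T/\U$ are tensor ideals of $\Coh\T$ by running your thick-subcategory trick one level up: for fixed $F\in\Coh\T/\U$ the objects $X$ with $H_X\otimes F\in\Coh\T/\U$ form a thick subcategory containing $\one$ (since $H_\one\otimes F\cong F$), hence all of $\T=\thick\one$, and then one passes to filtered colimits to handle arbitrary $E\in\Coh\T$; the case of $\Coh\U$ is similar. With these two facts in hand, the first proof of Proposition~\ref{pr:tensor-central} gives $\Ga_\U\cong(\Ga_\U H_\one)\otimes-$ and $L_\U\cong(L_\U H_\one)\otimes-$, and commutativity of any two such functors follows from the symmetry of the tensor product. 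If you are willing to add rigidity to the hypotheses, your reduction is fine as written (and then even works for a non-commutative tensor product via the second proof), but for the statement as given the direct appeal to the proposition does not go through.
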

\begin{proof}
  The assumption means $\T=\thick\one$ and this implies that all thick
  subcategories are tensor ideals. In particular, $\Coh\U$ and
  $\Coh\T/\U$ are tensor ideals of $\Coh\T$ for all thick
  $\U\subseteq\T$.  Now apply the argument from
  Proposition~\ref{pr:tensor-central}.
\end{proof}

\begin{rem}
   If one assumes rigidity the corollary  remains true when the tensor product
  is not commutative, using the second proof
  of  Proposition~\ref{pr:tensor-central}.
\end{rem}

\subsection*{Central actions}

Let $R$ be a graded commutative ring. We consider only graded
$R$-modules and $\Spec R$ denotes the set of homogeneous prime ideals
of $R$ with the Zariski topology.  We fix a triangulated category $\T$ with an $R$-linear action
via a homomorphism of graded rings $R\to Z^*(\T)$ into the graded
centre of $\T$; cf.\ \cite{BIK2008,BF2008}. This means that for all
objects $X,Y$ in $\T$
\[\Hom_\T^*(X,Y):=\bigoplus_{n\in\bbZ}\Hom_\T(X,\Si^n Y)\]
is a graded $R$-module. Also, for $F\in\Coh\T$ and $X\in\T$ the graded abelian group
\[F^*(X):=\bigoplus_{n\in\bbZ}F(\Si^{-n} X)\] is an $R$-module.

Let $\Phi$ be a multiplicatively closed set of homogeneous elements in
$R$. For an $R$-module $M$ set
\[M [\Phi^{-1}]:=M\otimes_R R [\Phi^{-1}]\] and note that
$M[\Phi^{-1}]=0$ if and only if $M_\frp=0$ for all $\frp\in\Spec R$
such that $\frp\cap\Phi=\varnothing$. We define a triangulated
category $\T[\Phi^{-1}]$ as follows. The objects are the same as in
$\T$. For the morphisms set
\[\Hom^*_{\T[\Phi^{-1}]}(X,Y):=\Hom^*_{\T}(X,Y)[\Phi^{-1}].\]
The exact triangles in $\T[\Phi^{-1}]$ are
given by the ones isomorphic to images of exact triangles in $\T$
under the canonical functor $\T\to \T[\Phi^{-1}]$. The kernel of this
functor is the thick subcategory
\[\T_\Phi:=\{X\in\T\mid H^*_X[\Phi^{-1}]=0\}\]
and from \cite[Lemma~3.5]{BIK2015} we have a triangle equivalence
\begin{equation}\label{eq:mult-closed}
  \T/\T_\Phi\longiso \T[\Phi^{-1}].
\end{equation}

A special case of the above construction arises for each $\frp\in\Spec
R$. We denote by $\T_\frp$ the triangulated category given by
localising the morphisms at $\frp$,
so \[\Hom^*_{\T_\frp}(X,Y):=\Hom^*_{\T}(X,Y)_\frp.\]
The notation $X\mapsto X_\frp$ is used for the canonical functor $\T\to\T_\frp$.

\begin{lem}\label{le:mult-closed-central}
 A thick subcategory of the form $\T_\Phi$  is central.
\end{lem}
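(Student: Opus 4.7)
The plan is to verify the definition of centrality directly: for every $\V\in\Thick\T$, I would exhibit, for each morphism $f$ between an object $X\in\T_\Phi$ and an object $V\in\V$ (in either direction), an explicit factorisation through an object of $\T_\Phi\wedge\V$.

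The starting observation would be that $X\in\T_\Phi$ forces the identity $1_X$ to be annihilated by some $\phi\in\Phi$. Indeed, specialising $H_X^*[\Phi^{-1}]=0$ to $Y=X$ gives $\End^*_\T(X)[\Phi^{-1}]=0$, so some $\phi\in\Phi$ kills $1_X$; this is precisely the statement that the canonical morphism $\phi_X\colon X\to\Si^{|\phi|}X$ vanishes in $\T$. Centrality of the $R$-action will then imply that for any such $f$, the element $\phi\cdot f$ can be computed equivalently as $\phi_X\circ f$ and as $\pm\,\Si^{|\phi|}(f)\circ\phi_V$, and hence vanishes on whichever side is convenient.

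I would then combine this vanishing with the defining triangle
\[V\xrightarrow{\phi_V}\Si^{|\phi|}V\to\kos{V}{\phi}\to\Si V\]
to produce the required factorisation. For $f\colon X\to V$ one has $\phi_V\circ f=0$, so $f$ factors through $\fibre(\phi_V)=\Si^{-1}\kos{V}{\phi}$; for $f\colon V\to X$ one has $\Si^{|\phi|}(f)\circ\phi_V=0$, so $\Si^{|\phi|}f$ factors through $\kos{V}{\phi}$, yielding a factorisation of $f$ through $\Si^{-|\phi|}\kos{V}{\phi}$. Either intermediate object is built only from $V$ and its shifts, hence lies in the thick subcategory $\V$; it also lies in $\T_\Phi$, since in $\T[\Phi^{-1}]$ the morphism $\phi_V$ is invertible and its cone $\kos{V}{\phi}$ is therefore zero, while $\T_\Phi$ is the kernel of $\T\to\T[\Phi^{-1}]$ by~\eqref{eq:mult-closed}.

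The main conceptual point, and the only nonobvious step, will be the choice to factor through an object built from $V$ rather than from $X$, so that membership in $\V$ is automatic; the switch between pre- and post-composition with $\phi$ that this requires is available precisely because the $R$-action is central. Beyond this, the argument is a short direct computation and I do not anticipate further technical obstacles.
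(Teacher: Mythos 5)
Your argument is correct, but it takes a genuinely different route from the paper. The paper stays inside $\Coh\T$: it uses the identification $(L_\Phi F)^*\cong F^*[\Phi^{-1}]$ coming from \eqref{eq:mult-closed} to see that $L_\Phi$ restricts to an endofunctor of $\Coh\U$ for any thick $\U$, deduces $L_\Phi\Ga_\U\cong\Ga_\U L_\Phi$, and then extracts $\Ga_\Phi\Ga_\U\cong\Ga_\U\Ga_\Phi$ from the localisation sequences by the five lemma. You instead verify the object-level factorisation criterion of Definition~\ref{de:centre} directly, using that $X\in\T_\Phi$ forces $\phi\cdot 1_X=0$ for some $\phi\in\Phi$ and that centrality of the $R$-action lets you transport this vanishing across any morphism $f$ between $X$ and $V$, producing a factorisation through a (de)suspension of $\kos{V}{\phi}$, which lies in $\V\wedge\T_\Phi$. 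Your approach is more elementary and explicit — it is in the spirit of the second (Vashaw) proof of Proposition~\ref{pr:tensor-central} and of the Koszul-object computations in Lemma~\ref{le:union} — and it exhibits a concrete intermediate object; the paper's approach is more uniform with the rest of \S\ref{se:centre}, where everything is phrased through $\Ga$ and $L$. One small point to tighten: rather than invoking invertibility of $\phi_V$ in the graded endomorphism ring of $\T[\Phi^{-1}]$ to conclude $\kos{V}{\phi}\in\T_\Phi$, it is cleaner to localise the long exact sequence of $\Hom^*_\T(Y,-)$ applied to the triangle $V\xrightarrow{\phi_V}\Si^{|\phi|}V\to\kos{V}{\phi}$ at $\Phi$, where multiplication by $\phi$ becomes bijective and hence $H^*_{\kos{V}{\phi}}[\Phi^{-1}]=0$; this avoids any discussion of graded units versus isomorphisms in the quotient category.
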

\begin{proof}
  To simplify notation we set $\Ga_\Phi:=\Ga_{\T_\Phi}$ and
  $L_\Phi:=L_{\T_\Phi}$. Note that
  \[(L_\Phi F)^*\cong F^*[\Phi^{-1}]\] for each $F$ in $\Coh\T$ by
  \eqref{eq:mult-closed}.  Now fix a thick subcategory
  $\U\subseteq\T$. We have canonical isomorphisms
\[L_\Phi \Ga_\U\leftiso \Ga_\U L_\Phi \Ga_\U\iso\Ga_\U L_\Phi\]
since $L_\Phi$ restricts to a functor $\Coh\U\to\Coh\U$. It remains to
consider the following commutative diagram with exact rows.
  \[\begin{tikzcd}
      \cdots\arrow{r}&\Ga_\Phi\Ga_\U\arrow{r}&\Ga_\U\arrow{r}&
      L_\Phi\Ga_\U\arrow{r}&\cdots\\
      \cdots\arrow{r}&\Ga_\U\Ga_\Phi\Ga_\U\arrow{r}\arrow{u}\arrow{d}&
      \Ga_\U\Ga_\U\arrow{r}\arrow[u, "\sim" labl]\arrow[d, "\sim" labl]&
      \Ga_\U L_\Phi\Ga_\U\arrow{r}\arrow[u, "\sim" labl]\arrow[d, "\sim" labl]&\cdots\\
      \cdots\arrow{r}&\Ga_\U \Ga_\Phi\arrow{r}&\Ga_\U\arrow{r}&
      \Ga_\U L_\Phi\arrow{r}&\cdots\\
    \end{tikzcd}\] 
Then the five lemma yields the isomorphism $\Ga_\Phi\Ga_\U\cong
\Ga_\U\Ga_\Phi$. Thus $\T_\Phi$ is central.
\end{proof}

For an ideal $\fra$ of $R$ that is generated by finitely many
homogeneous elements set
\[V(\fra):=\{\frp\in\Spec R\mid \fra\subseteq\frp\}\]
and consider the thick subcategory
\[\T_{V(\fra)}:=\{X\in\T\mid X_\frp=0 \text{ for } \frp\in (\Spec R)\setminus V(\fra)\}.\]

\begin{lem}\label{le:Zariski-closed-central}
  A thick subcategory of the form $\T_{V(\fra)}$ is
  central.
\end{lem}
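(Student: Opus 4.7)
The plan is to express $\T_{V(\fra)}$ as a finite meet of subcategories of the form $\T_\Phi$ already known to be central by Lemma~\ref{le:mult-closed-central}, and then invoke Proposition~\ref{pr:finite-sup} (finite meets of central elements are central).

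Write $\fra=(r_1,\dots,r_k)$ with each $r_i$ homogeneous, and for each $i$ let $\Phi_i:=\{r_i^n\mid n\ge 0\}$. I claim
\[
\T_{V(\fra)}=\T_{\Phi_1}\wedge\cdots\wedge\T_{\Phi_k}.
\]
To verify this, fix $X\in\T$ and set $\fra_X:=\{r\in R\text{ homogeneous}\mid r\cdot\id_X=0\}$, the annihilator in $R$ of $\id_X$. A standard check using centrality of the action gives $X_\frp=0$ iff there is $s\notin\frp$ with $s\cdot\id_X=0$, iff $\fra_X\not\subseteq\frp$; so $\{\frp\mid X_\frp\ne 0\}=V(\fra_X)$. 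Hence $X\in\T_{V(\fra)}$ iff $V(\fra_X)\subseteq V(\fra)$ iff $\sqrt{\fra}\subseteq\sqrt{\fra_X}$ iff every $r_i$ satisfies $r_i^n\cdot\id_X=0$ for some $n$. On the other hand, $X\in\T_{\Phi_i}$ means $H^*_X[r_i^{-1}]=0$; evaluating at $Y=X$ and $\xi=\id_X$ shows $r_i^n\cdot\id_X=0$ for some $n$, and conversely if $r_i^n\cdot\id_X=0$ then $r_i^n\cdot\xi = r_i^n{\cdot}\id_X\circ\xi=0$ for any $\xi$ into $X$, so $H^*_X[r_i^{-1}]=0$. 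Combining, $X\in\T_{\Phi_i}$ iff $r_i\in\sqrt{\fra_X}$, and the displayed equality of thick subcategories follows.

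Given the equality, Lemma~\ref{le:mult-closed-central} says each $\T_{\Phi_i}$ is central, and an iterated application of Proposition~\ref{pr:finite-sup} shows that the finite meet $\T_{\Phi_1}\wedge\cdots\wedge\T_{\Phi_k}=\T_{V(\fra)}$ is central.

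The one step I expect to require care is the identification $\T_{V(\fra)}=\bigwedge_i\T_{\Phi_{r_i}}$, specifically the translation between ``$X_\frp=0$ for every $\frp\notin V(\fra)$'' and ``$r_i$ acts nilpotently on $\id_X$ for each $i$''; once this is phrased via the radical of $\fra_X$ and the covering $(\Spec R)\setminus V(\fra)=\bigcup_i D(r_i)$, it follows from elementary commutative algebra, and the finite generation of $\fra$ is precisely what reduces the problem to a finite meet.
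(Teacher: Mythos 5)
Your proof is correct and follows exactly the paper's argument: write $\fra=(r_1,\dots,r_k)$, identify $\T_{V(\fra)}=\T_{\Phi_1}\wedge\cdots\wedge\T_{\Phi_k}$, apply Lemma~\ref{le:mult-closed-central} to each factor, and close with Proposition~\ref{pr:finite-sup}. The only difference is that you spell out the identification $\T_{V(r_i)}=\T_{\Phi_i}$ via the radical of the annihilator of $\id_X$, which the paper simply asserts.
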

\begin{proof}
  Choose homogeneous generators $r_1,\ldots,r_n$ of $\fra$
  and set $\Phi_i=\{r_i^p\mid p\geq 1\}$ for $1\le i\le n$. Then
  $\T_{V(r_i)}=\T_{\Phi_i}$ is central for each $i$ by
  Lemma~\ref{le:mult-closed-central}.  We have
  $V(\fra)=V(r_1)\cap\cdots\cap V(r_n)$ and therefore
  \[\T_{V(\fra)}=\T_{V(r_1)}\wedge\cdots\wedge \T_{V(r_n)}\] is
  central by Proposition~\ref{pr:finite-sup}.
\end{proof}

The sets $V(\fra)$ given by finitely generated ideals form a distributive lattice with
\[V(\fra)\cap V(\frb)=V(\fra+\frb)\qquad\text{and}\qquad V(\fra)\cup V(\frb)=V(\fra\frb).\]

\begin{lem}\label{le:union}
  For $U=V(\fra)$ and $V=V(\frb)$ we have
  \[\T_{U\cap V}=\T_U\wedge\T_V\qquad\text{and}\qquad
  \T_{U\cup V}=\T_U\vee\T_V.\]
\end{lem}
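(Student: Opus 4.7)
I would handle the meet and join identities separately. The meet identity $\T_{U\cap V}=\T_U\wedge\T_V$ is a direct unwinding of definitions: an object $X$ lies in $\T_U\cap\T_V$ iff $X_\frp=0$ for every $\frp\notin U$ and every $\frp\notin V$, which is exactly the defining condition of $\T_{U\cap V}$ since $U\cap V=V(\fra+\frb)$. This dispatches the meet in a single line.

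For the join identity I would first prove the principal case $\T_{V(r)}\vee\T_{V(s)}=\T_{V(rs)}$. Writing $\Phi=\{r^p\}$, $\Xi=\{s^p\}$, $\Psi=\{(rs)^p\}$, Lemma~\ref{le:mult-closed-central} identifies these three subcategories with $\T_\Phi$, $\T_\Xi$, $\T_\Psi$; all three are central, so Lemma~\ref{le:U-V-central-localisation} gives $L_{\T_\Phi\vee\T_\Xi}\cong L_\Phi L_\Xi$. Iterating the isomorphism $(L_\Phi F)^*\cong F^*[\Phi^{-1}]$ used in Lemma~\ref{le:mult-closed-central} then yields $(L_\Phi L_\Xi H_X)^*\cong H_X^*[r^{-1}][s^{-1}]=H_X^*[(rs)^{-1}]\cong(L_\Psi H_X)^*$, where the middle equality uses the elementary fact that inverting $rs$ is equivalent to inverting $r$ and $s$. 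Since $X\in\U$ iff $L_\U H_X=0$, the thick subcategories $\T_\Phi\vee\T_\Xi$ and $\T_\Psi$ must coincide.

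For the general case, write $\fra=(r_1,\dots,r_n)$ and $\frb=(s_1,\dots,s_m)$. The meet identity just established, together with $V(\fra)=\bigcap_iV(r_i)$, $V(\frb)=\bigcap_jV(s_j)$, and the observation that $\fra\frb$ is generated by $\{r_is_j\}$, delivers $\T_{V(\fra)}=\bigwedge_i\T_{V(r_i)}$, $\T_{V(\frb)}=\bigwedge_j\T_{V(s_j)}$ and $\T_{V(\fra\frb)}=\bigwedge_{i,j}\T_{V(r_is_j)}$. All the constituents $\T_{V(r_i)}$ and $\T_{V(s_j)}$ are central by Lemma~\ref{le:mult-closed-central}, hence sit in the distributive sublattice $Z(\Thick\T)$ of Proposition~\ref{pr:finite-sup}. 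An elementary induction on the finite distributive law then produces $\bigl(\bigwedge_i\T_{V(r_i)}\bigr)\vee\bigl(\bigwedge_j\T_{V(s_j)}\bigr)=\bigwedge_{i,j}\bigl(\T_{V(r_i)}\vee\T_{V(s_j)}\bigr)$, and combining this with the principal case yields the join identity.

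The main obstacle is the principal case; the remainder is lattice bookkeeping. There one must verify that the iterated cohomological localisation at $\{r^p\}$ followed by $\{s^p\}$ reproduces the single localisation at $\{(rs)^p\}$, which reduces to the purely algebraic fact $M[r^{-1},s^{-1}]=M[(rs)^{-1}]$ for $R$-modules $M$, after which reading off thick subcategories from the vanishing of $L$-functors on representables closes the argument.
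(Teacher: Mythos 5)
Your proof is correct, and for the one genuinely nontrivial step --- the principal case $\T_{V(rs)}=\T_{V(r)}\vee\T_{V(s)}$ --- it takes a different route from the paper. The paper argues with generators: it invokes $\T_{V(r)}=\thick(\{\kos{X}{r}\mid X\in\T\})$ from \cite[Proposition~3.10]{BIK2015} and uses the octahedral axiom to exhibit $\kos{X}{rs}$ as an extension of $\kos{X}{r}$ and $\kos{\Si^{|r|}X}{s}$, which yields the nontrivial inclusion $\T_{V(rs)}\subseteq\T_{V(r)}\vee\T_{V(s)}$ directly. You instead detect membership through vanishing of localisation functors: centrality of $\T_\Phi$ and $\T_\Xi$ together with Lemma~\ref{le:U-V-central-localisation} give $L_{\T_\Phi\vee\T_\Xi}\cong L_\Phi L_\Xi$, and iterating $(L_\Phi F)^*\cong F^*[\Phi^{-1}]$ reduces everything to the module identity $M[r^{-1}][s^{-1}]=M[(rs)^{-1}]$, after which $X\in\U\Leftrightarrow L_\U H_X=0$ closes the argument. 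This trades the Koszul-object input and the octahedron for the identification \eqref{eq:mult-closed} and the commutation machinery of \S\ref{se:centre}; it is more in the spirit of the paper's functorial toolkit, at the price of leaning on the centrality lemmas where the paper's argument is purely about generators. The treatment of the meet identity and the reduction of the general join to the principal case via distributivity in $Z(\Thick\T)$ and $V(\fra\frb)=\bigcap_{i,j}V(r_i s_j)$ coincide with the paper's. One small attribution slip: the identification $\T_{V(r)}=\T_\Phi$ is not the content of Lemma~\ref{le:mult-closed-central} (which asserts centrality of $\T_\Phi$) but is the observation made in the proof of Lemma~\ref{le:Zariski-closed-central}; this does not affect the mathematics.
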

\begin{proof}
  The first equality is clear. For the second equality choose
  homogeneous generators
  $r_1,\ldots,r_m$ of $\fra$ and  $s_1,\ldots,s_n$ of $\frb$ such that
  \[V(\fra)=V(r_1)\cap\cdots\cap V(r_m)\qquad\text{and}\qquad\text
    V(\frb)=V(s_1)\cap\cdots\cap V(s_n).\]
  For a homogeneous element $r\in R$ of degree $|r|$ and an object
  $X\in\T$ we denote by
  $\kos{X}{r}$ the cone of $X\xrightarrow{r}\Si^{|r|} X$ and have
  \[\T_{V(r)}=\thick(\{\kos{X}{r}\mid X\in\T\})\]
  by \cite[Proposition~3.10]{BIK2015}. For  homogeneous elements $r,s\in R$ an application of the
  octahedral axiom shows that $\kos{X}{rs}$ is an extension of $\kos{X}{r}$ and  $\kos{\Si^{|r|}X}{s}$. Thus
  \[\T_{V(r)\cup V(s)}=\T_{V(rs)}=\T_{V(r)}\vee\T_{V(s)}.\]
  Using the distributivity  in $Z(\Thick\T)$
  from Proposition~\ref{pr:finite-sup} we obtain
\[    \T_{V(\fra)}\vee\T_{V(\frb)}=\bigwedge_{i,j}( \T_{V(r_i)}\vee  \T_{V(s_j)})
=\bigwedge_{i,j} \T_{V(r_i s_j)}=\T_{V(\fra\frb)}=\T_{V(\fra)\cup
  V(\frb)}\]
since
\[V(\fra\frb)=\bigcap_{i,j} V(r_i s_j).\qedhere\]
\end{proof}

Recall that a set $V\subseteq \Spec R$ has a Zariski open and
quasi-compact complement if and only if $V=V(\fra)$
for an ideal $\fra$ of $R$ that is generated by finitely many
homogeneous elements.  A subset $V\subseteq\Spec R$ is called
\emph{Thomason} if it can be written as $V=\bigcup_i V_i$ such that
each $(\Spec R)\setminus V_i$ is Zariski open and quasi-compact. The
Thomason subsets are precisely the open subsets for the \emph{Hochster
  dual topology} on $\Spec R$ \cite{Ho1969}. We write $\Spec^{\vee} R$
to denote the spectrum of prime ideals with this dual topology.

Given a frame $F$, one calls $x\in F$ \emph{finite} or \emph{compact}
if $x\le\bigvee_{i\in I}y_i$ implies $x\le\bigvee_{i\in J}y_i$ for
some finite subset $J\subseteq I$. A frame is \emph{coherent} when
every element can be written as a join of finite elements and the
finite elements form a sublattice. In this case we write $F^c$ for the
sublattice of finite elements and observe that for any frame $F'$ a
lattice morphism $F^c\to F'$ extends uniquely to a frame morphism
$F\to F'$.

The following proposition identifies the thick subcategories of $\T$ which are
given by a central ring action; they are central elements in $\Thick\T$.

\begin{prop}\label{pr:Thomason}
  The assignment
  \[V\longmapsto \T_{V}:=\bigvee_{V(\fra)\subseteq V} \T_{V(\fra)}\]
induces a frame morphism from the
  open subsets of $\Spec^{\vee} R$ to the central thick subcategories
  of $\T$.
\end{prop}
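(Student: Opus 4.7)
The plan is to invoke the universal property of coherent frames mentioned just before the statement. The opens of $\Spec^{\vee}R$ form a coherent frame whose compact elements are precisely the sets $V(\fra)$ with $\fra$ a finitely generated homogeneous ideal; this is a standard consequence of Hochster duality. So first I would restrict the assignment to compact elements and verify that $V(\fra)\mapsto\T_{V(\fra)}$ is a lattice morphism into $Z(\Thick\T)$: Lemma~\ref{le:Zariski-closed-central} places the image in $Z(\Thick\T)$, and Lemma~\ref{le:union}, combined with $V(\fra)\cap V(\frb)=V(\fra+\frb)$ and $V(\fra)\cup V(\frb)=V(\fra\frb)$, yields preservation of finite meets and joins on compacts.

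Next I would identify the canonical frame extension with the formula in the statement. By the universal property, the extension sends a Thomason open $V$ to the join of the images of the compact elements below it, which are exactly the $V(\fra)\subseteq V$. This join equals $\bigvee_{V(\fra)\subseteq V}\T_{V(\fra)}=\T_V$, so the two descriptions coincide. Centrality of $\T_V$ is then automatic from Proposition~\ref{pr:frame}, as joins of centrals remain central.

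The step I expect to be the main obstacle is the direct verification that the resulting map preserves finite meets (joins being built into the formula). Given Thomason opens $V=\bigcup_i V(\fra_i)$ and $W=\bigcup_j V(\frb_j)$, I would exploit the infinite distributivity of $Z(\Thick\T)$ from Proposition~\ref{pr:frame} together with Lemma~\ref{le:union} to compute
\[
\T_V\wedge\T_W=\Big(\bigvee_i\T_{V(\fra_i)}\Big)\wedge\Big(\bigvee_j\T_{V(\frb_j)}\Big)=\bigvee_{i,j}\T_{V(\fra_i)\cap V(\frb_j)}=\T_{V\cap W},
\]
using $V(\fra_i)\cap V(\frb_j)=V(\fra_i+\frb_j)$ at the end to rewrite the double join as a Thomason cover of $V\cap W$. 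With this one distributivity computation in place, the verification that the formula defines a frame morphism is complete.
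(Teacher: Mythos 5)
Your proposal is correct and follows essentially the same route as the paper: both identify the Thomason opens as a coherent frame with compacts $V(\fra)$, use Lemmas~\ref{le:Zariski-closed-central} and \ref{le:union} to get a lattice morphism on compacts into the frame $Z(\Thick\T)$, and extend via the universal property of coherent frames. Your final explicit check that finite meets are preserved is just an unwinding of that universal property (using Proposition~\ref{pr:frame}) and is harmless but not needed once the universal property is invoked.
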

\begin{proof}
  The open sets of $\Spec^{\vee} R$ form a coherent frame and the
  finite elements are precisely the sets of the form $V(\fra)$. Then
  the assertion follows from the preceding discussion, once we observe
  that a thick subcategory of the form $\T_{V(\fra)}$ is central by
  Lemma~\ref{le:Zariski-closed-central} and that the map
  $V(\fra)\mapsto\T_{V(\fra)}$ is a lattice morphism by
  Lemma~\ref{le:union}.
\end{proof}

It would be interesting to know when for a given triangulated category
all central subcategories are given by a central ring action. In
algebraic examples this seems to be more likely, while it
seems not to be true for the stable homotopy category of finite
spectra.

\subsection*{Noetherian cohomology}

Let $R$ be a graded commutative ring and $\T$ a triangulated category
with an $R$-linear action via a homomorphism of graded rings $R\to
Z^*(\T)$. We assume that $R$ is noetherian and that $\Hom_\T^*(X,Y)$
is a noetherian $R$-module for all objects $X,Y$ in $\T$.

The \emph{cohomological support} of an object $X$ is by definition
\[\supp_R X:=\{\frp\in\Spec R\mid X_\frp\neq 0\}.\]

\begin{lem}
  We have $\supp_R X=V(\fra)$ where $\fra$ denotes the kernel of the
  ring homomorphism $R\to\End^*_\T(X)$.
\end{lem}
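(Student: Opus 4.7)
The plan is to reduce the computation of $\supp_R X$ to the annihilator of a finitely generated $R$-module, namely $\End^*_\T(X)$, and then identify that annihilator with $\fra$.

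First, I would observe that $X_\frp = 0$ if and only if $\End^*_\T(X)_\frp = 0$. Indeed, by the definition of $\T_\frp$ we have $\End^*_{\T_\frp}(X_\frp) \cong \End^*_\T(X)_\frp$, so the right-hand side vanishes precisely when $1_{X_\frp} = 0$, which happens precisely when $X_\frp = 0$. Hence
\[
\supp_R X = \{\frp \in \Spec R \mid \End^*_\T(X)_\frp \neq 0\}.
\]

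Next, because $R$ is noetherian and $\End^*_\T(X)$ is a noetherian (hence finitely generated) $R$-module, the localisation $\End^*_\T(X)_\frp$ is nonzero if and only if $\Ann_R(\End^*_\T(X)) \subseteq \frp$. So it remains to identify $\Ann_R(\End^*_\T(X))$ with $\fra$.

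The inclusion $\Ann_R(\End^*_\T(X)) \subseteq \fra$ is immediate from the fact that $1_X \in \End^*_\T(X)$, since $r \cdot 1_X = 0$ whenever $r$ annihilates the whole module. For the reverse inclusion, take $r \in \fra$, so that $r \cdot 1_X = 0$ in $\End^*_\T(X)$. Since the action of $R$ factors through the graded centre $Z^*(\T)$, for any $f \in \End^*_\T(X)$ we have $r \cdot f = f \circ (r \cdot 1_X) = 0$. Thus $\fra \subseteq \Ann_R(\End^*_\T(X))$, and combining the two steps yields $\supp_R X = V(\fra)$.

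There is no real obstacle here; the only point where one must be careful is the identification $\End^*_{\T_\frp}(X_\frp) \cong \End^*_\T(X)_\frp$, which is just the definition of the localised category together with the triangle equivalence \eqref{eq:mult-closed}, and the use of centrality of the action to convert $\ker(R \to \End^*_\T(X))$ into a module annihilator.
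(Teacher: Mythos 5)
Your proof is correct and follows essentially the same route as the paper: both reduce the statement to the identity $\supp_R M=V(\Ann_R M)$ for a finitely generated $R$-module and use that the central action factors through $\End^*_\T(X)$, so that $\Ann_R\End^*_\T(X)=\fra$. The only cosmetic difference is that you test vanishing of $X_\frp$ on $\End^*_\T(X)_\frp$ alone, while the paper tests it on $\Hom^*_\T(Y,X)_\frp$ for all $Y$; both are valid.
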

\begin{proof}
  Observe that $X_\frp=0$ if and only $(H^*_X)_\frp=0$. Then the equality
  follows from the fact that for each $Y\in\T$ the $R$-action on
  $H^*_X(Y)=\Hom_\T^*(Y,X)$ factors through $\End_\T^*(X)$, and that
  for any finitely generated $R$-module $M$
\[\supp_R M :=\{\frp\in\Spec R\mid M_\frp\neq 0\}=V(\Ann_R M).\qedhere\]
\end{proof}

\begin{lem}\label{le:noetherian}
  For  an object  $X\in\T$ and a Thomason set $V\subseteq\Spec R$ we
  have
  \[\supp_R X\subseteq V\;\;\iff\;\; X\in\T_V.\]
\end{lem}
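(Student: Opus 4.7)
The plan is to prove the two implications separately, both reducing to the previous lemma which identifies $\supp_R X$ with $V(\fra)$ for the (necessarily finitely generated) ideal $\fra = \ker(R\to \End^*_\T(X))$.

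For the implication $\Leftarrow$, I would introduce the full subcategory
\[\W := \{X \in \T \mid \supp_R X \subseteq V\}\]
and first verify that $\W$ is a thick subcategory of $\T$. This uses that the localisation functor $X\mapsto X_\frp$ is exact (so the condition $X_\frp = 0$ is preserved under triangles, suspension, and direct summands), hence $\supp_R Y \subseteq \supp_R X\cup\supp_R Z$ for any triangle $X\to Y\to Z\to$. Next, whenever $V(\fra)\subseteq V$ for a finitely generated homogeneous ideal $\fra$, the definition of $\T_{V(\fra)}$ gives $\T_{V(\fra)}\subseteq\W$. Taking the join over all such $V(\fra)\subseteq V$ yields $\T_V\subseteq\W$, which is exactly the required implication.

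For the implication $\Rightarrow$, I would use the preceding lemma to write $\supp_R X = V(\fra)$ with $\fra=\ker(R\to\End^*_\T(X))$. The noetherian hypothesis on $R$ guarantees that $\fra$ is finitely generated, so $V(\fra)$ is one of the compact elements indexing the join that defines $\T_V$. Since by assumption $V(\fra)\subseteq V$, and since $X\in\T_{V(\fra)}$ directly from the definition (the condition $X_\frp=0$ for $\frp\notin V(\fra)$ is simply $\frp\notin\supp_R X$), we conclude $X\in\T_{V(\fra)}\subseteq\T_V$.

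The argument is essentially bookkeeping; I do not expect a genuine obstacle. The only point requiring slight care is the stability of $\W$ under triangles, which depends on exactness of the localisation $\T\to\T_\frp$ and the fact that $X_\frp=0$ iff $(H^*_X)_\frp=0$, as recorded earlier via \eqref{eq:mult-closed}. Everything else follows from unfolding the definitions and the noetherianness of $R$.
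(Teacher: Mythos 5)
Your proposal is correct. The forward implication is exactly the paper's argument: write $\supp_R X=V(\fra)$ with $\fra=\ker(R\to\End^*_\T(X))$ via the preceding lemma, note that $\fra$ is generated by finitely many homogeneous elements because $R$ is noetherian, and observe that $X\in\T_{V(\fra)}\subseteq\T_V$. For the converse you take a slightly different route. The paper asserts that $X\in\T_V$ forces $X\in\T_{V(\frb)}$ for a single $V(\frb)\subseteq V$; this implicitly uses that the family $\{\T_{V(\frb)}\mid V(\frb)\subseteq V\}$ is directed (by Lemma~\ref{le:union}, $\T_{V(\fra)}\vee\T_{V(\frb)}=\T_{V(\fra\frb)}$ and $V(\fra)\cup V(\frb)\subseteq V$), so that the join is just the union. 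You instead show that $\W=\{X\mid\supp_R X\subseteq V\}$ is a thick subcategory containing every $\T_{V(\frb)}$ with $V(\frb)\subseteq V$, hence contains the join $\T_V$. Both arguments are sound; yours trades the directedness observation for the (routine) verification that $\W$ is thick, which follows as you say from exactness of $\T\to\T_\frp$ and the identification of $\T_\frp$ with a Verdier quotient via \eqref{eq:mult-closed}. The paper's version yields the marginally stronger statement that any $X\in\T_V$ already lies in some $\T_{V(\frb)}$, but for the lemma as stated the two are interchangeable.
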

\begin{proof}
From the preceding lemma we have an ideal $\fra$ such that   $\supp_R X=V(\fra)$. Then $\supp_R X\subseteq V$ implies
$X\in\T_{V(\fra)}\subseteq\T_\V$. On the other hand, $X\in\T_V$
implies $X\in\T_{V(\frb)}$ for some $V(\frb)\subseteq V$, and therefore
$\supp_R X\subseteq V(\frb)\subseteq V$. 
\end{proof}

\subsection*{Hereditary algebras}

For a finite dimensional hereditary algebra we consider the category of
perfect complexes. Thick subcategories that are generated by
exceptional sequences have  been classified in this case via
non-crossing partitions \cite{HK2016,IT2009}. The following lemma shows that
they are usually not central.

\begin{lem}\label{le:thick-adjoints}
Let $\U\subseteq\T$ be a thick subcategory such that the inclusion
admits a left and a right adjoint. If $\U$ is central, then $^\perp\U=\U^\perp$.
\end{lem}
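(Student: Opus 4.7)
The plan is to establish both inclusions $\U^\perp\subseteq{}^\perp\U$ and ${}^\perp\U\subseteq\U^\perp$ by the same symmetric argument, using centrality to force the relevant $\Hom$-sets to vanish. The existence of left and right adjoints is not needed in the core computation; it simply provides the ambient semi-orthogonal setting (and ensures both $\U^\perp$ and ${}^\perp\U$ are thick subcategories giving rise to triangles with quotient in $\U$), making the conclusion $^\perp\U=\U^\perp$ a genuine statement about complementary subcategories.

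The first step would be to observe the trivial-intersection fact $\U\wedge\U^\perp=0$: any $X$ in both subcategories satisfies $\Hom_\T(X,X)=0$, hence $\id_X=0$, hence $X=0$. The same reasoning gives $\U\wedge{}^\perp\U=0$.

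Next, given $V\in\U^\perp$, I would set $\V:=\thick V$. Since $\U^\perp$ is itself a thick subcategory, $\V\subseteq\U^\perp$, so $\U\wedge\V\subseteq\U\wedge\U^\perp=0$. Because $\U$ is central, the pair $\U,\V$ is commuting, so Definition~\ref{de:centre} implies that every morphism $V\to U$ with $U\in\U$ factors through an object of $\U\wedge\V=0$, and is therefore zero. Hence $V\in{}^\perp\U$, proving $\U^\perp\subseteq{}^\perp\U$. Running the argument with $V\in{}^\perp\U$ (and using centrality applied to morphisms $U\to V$) yields the reverse inclusion.

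The only mild obstacle is packaging the centrality property correctly: one must pass from the condition on the single object $V$ to the thick subcategory $\thick V$ in order to invoke commutativity with $\U$, and then exploit that the meet with $\U$ is zero to conclude vanishing of maps in either direction. Everything else is routine.
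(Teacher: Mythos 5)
Your argument is correct, and it is genuinely different from (and more elementary than) the one in the paper. You work entirely at the object level: centrality means $\U$ commutes with the thick subcategories $\U^\perp$ and $^\perp\U$, the meets $\U\wedge\U^\perp$ and $\U\wedge{}^\perp\U$ vanish for the trivial reason you give, and the factorisation characterisation of commuting then kills all morphisms between $\U$ and either perpendicular category in both directions, yielding both inclusions. (Passing to $\thick V$ is harmless but unnecessary: you may commute $\U$ directly with $\U^\perp$ and with $^\perp\U$, since these are always thick.) The paper instead argues inside $\Coh\T$ with the functors $\Ga$ and $L$: the two adjoints identify $\Ker\Ga_{^\perp\U}=\Coh\U$ and $\Ker\Ga_\U=\Coh\U^\perp$, so $\Ga_{^\perp\U}\Ga_\U=0$ and $L_\U L_{^\perp\U}=0$, and centrality (which lets these compositions be reversed) then forces $^\perp\U\subseteq\U^\perp$ and $\U^\perp\subseteq{}^\perp\U$ respectively. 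What your route buys is brevity and the observation that the adjoint hypothesis is not needed for the implication itself; it is there to make the conclusion meaningful in the intended applications (exceptional sequences), where both adjoints exist and $^\perp\U$, $\U^\perp$ are genuine complements of $\U$, so the contrapositive detects non-centrality. One small correction to your framing: $\U^\perp$ and $^\perp\U$ are thick for any $\U$, without adjoints; the adjoints are only needed for the triangles exhibiting them as complements.
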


\begin{proof}
  We have
  \[\Ker\Ga_{^\perp\U}=\Coh\U\qquad\text{and}\qquad\Ker\Ga_\U=\Coh{\U^\perp}.\]
  Thus $\Ga_{^\perp\U}\Ga_\U=0$, while  $\Ga_\U\Ga_{^\perp\U}\neq 0$
  when $^\perp\U\not\subseteq\U^\perp$. On the other hand, we have
  $\Ker L_\U=\Coh\U$. Thus $L_\U L_{^\perp\U}=0$, and  $L_\U
  L_{^\perp\U}\neq 0$ when  $^\perp\U\not\supseteq\U^\perp$.
\end{proof}

\begin{exm}\label{ex:A2}
  We consider the path algebra $A=k(\circ\to\circ)$ over a field $k$
  and the bounded derived category $\T=\bfD^b(\mod A)$ of finitely
  generated $A$-modules, which identifies with the category of perfect
  complexes over $A$. The Hasse diagram of the lattice of thick
  subcategories is the following.
\[\begin{tikzcd}[column sep = tiny, row sep = small]
    &\circ\arrow[dash]{d}\arrow[dash]{ld}\arrow[dash]{rd}\\
    \circ \arrow[dash]{rd}&\circ\arrow[dash]{d}&\circ \arrow[dash]{ld}\\
    &\circ
  \end{tikzcd}\]
For any proper thick subcategory $\U\subseteq\T$ we have
\[\Thick\T=\{0,\U,{^\perp\U},\U^\perp,\T\}.\]
In particular, $Z(\Thick\T)=\{0,\T\}$ by Lemma~\ref{le:thick-adjoints}.

The category of $A$-modules admits a symmetric monoidal structure
given by the pointwise tensor product over $k$. All thick
subcategories are tensor ideals, except the one generated by the
tensor unit.  This provides a pair of thick tensor ideals $\U,\V$
such that $\Ga_\U\Ga_\V\not\cong \Ga_\V\Ga_\U$, because we have
non-zero morphisms between $\U$ and $\V$ in one direction but $\U\wedge\V=0$.
\end{exm}

The above example can be generalised as follows.

\begin{exm}
Let $R$ be a commutative ring and consider the category of perfect
complexes $\T=\Perf T_2(R)$ over the matrix ring
$T_2(R)=\smatrix{R&R\\0&R}$. This category admits an obvious
$R$-action which provides an embedding
\[\Thick\Perf R\hookrightarrow Z(\Thick\T)\subsetneq\Thick\T.\]
\end{exm}  

\begin{exm}
  We consider the category of coherent sheaves on the projective line
  $\bbP^1$ over a fixed field $k$ and its derived category
  $\T=\bfD^b(\coh\bbP^1)$; it is equivalent to the category of perfect
  complexes over the path algebra
  $k(\circ\,
  \genfrac{}{}{0pt}{}{\raisebox{-1.75pt}{$\to$}}{\raisebox{1.75pt}{$\to$}}\,
  \circ)$.

  We recall from \cite{KS2019} the description of $\Thick\T$. There is a lattice isomorphism
\begin{displaymath} 
\{\text{thick subcategories of }\bfD^b(\coh
\mathbb{P}^1) \text{ generated by vector bundles}\}\longiso  \mathbb{Z}
\end{displaymath}
given by $\thick\calO(n)\mapsto n$, where $\mathbb{Z}$ denotes the lattice given by the following Hasse
diagram:
\[\begin{tikzcd}[column sep = tiny, row sep = small]
    &&\circ\arrow[dash]{d}\arrow[dash]{lld}\arrow[dash]{ld}\arrow[dash]{rd}\arrow[dash]{rd}\arrow[dash]{rrd}\\
  \cdots \arrow[dash]{rrd} &\circ \arrow[dash]{rd}&\circ\arrow[dash]{d}&\circ \arrow[dash]{ld}&\cdots \arrow[dash]{lld} \\
    &&\circ
  \end{tikzcd}\]
We have
\[(\thick\calO(n))^\perp=\thick\calO(n-1)\quad\text{and}\quad ^\perp(\thick\calO(n))=\thick\calO(n+1).\]
Thus a subcategory of the form $\thick\calO(n)$ is not central by
Lemma~\ref{le:thick-adjoints}.
On the other hand, we  have a lattice isomorphism
\[
\{\text{thick tensor ideals of }\bfD^b(\coh \mathbb{P}^1)\}\longiso
\{\text{Thomason subsets of }\mathbb{P}^1\}
\]
given by $\U\mapsto\bigcup_{X\in\U}\supp X$. This yields an
isomorphism\footnote{Let $L',L''$ be a pair of lattices with smallest
  elements $0',0''$ and greatest elements $1',1''$. Then
  $L'\amalg L''$ denotes the new lattice which is obtained from the
  disjoint union $L'\cup L''$ (viewed as sum of posets) by identifying
  $0'=0''$ and $1'=1''$.}
\[\Thick \bfD^b(\coh\bbP^1)\longiso \{\text{Thomason subsets of 
  }\mathbb{P}^1\}\amalg\bbZ.\]
Now let $\U\subseteq\T$ be a proper thick tensor ideal and
$\V=\thick\calO(n)$. Then $\U\wedge\V=0$ but $\Ga_\V\Ga_\U\neq 0$,
because we have $\Hom(\calO(n),X)\neq 0$ for any non-zero torsion sheaf $X$
in $\U$. Thus $\U$ is not central. We conclude that  $Z(\Thick\T)=\{0,\T\}$. 
\end{exm}

\subsection*{Group algebras}

Let $G$ be a finite group and $k$ a field. We consider the stable
category $\T=\stmod kG$ of finitely generated modules over the group
algebra $kG$. This is a rigid tensor triangulated category and the tensor
ideals are precisely the thick subcategories given by the
central action of the group cohomology \cite{BCR1997}.

\begin{exm}[Benson]
  In general there are central thick subcategories which are not
  tensor ideal. Let $B_0=e(kG)$ denote the principal block of
  $kG$ which is given by a central idempotent $e\in kG$. We write
  $R:=Z(kG)$ for the centre and set  $V:=V((1- e)R)\subseteq\Spec R$. Then $\stmod B_0=\T_V$ is
  central, and this is a tensor ideal if and only if $B_0=kG$. For
  example, we have $\thick k=\stmod B_0\neq\T$ for $G=A_7$ and
  $\cha k=2$.
\end{exm}

\begin{exm}[Benson]
  Let $G=\bbZ/3\times S_3$ and $\cha k=3$. Then $\thick k$ is not
  central. There are two simple modules, denoted by $k$ and $\e$ (for
  sign representation). There is a uniserial module $X=[k,\e,k]$
  supported in the nucleus \cite{Be1994} and a non-zero morphism $k\to
  X$. However, the intersection of the thick subcategories $\thick k$ and
  $\thick X$ is zero.
\end{exm}

It seems interesting to find out more about $Z(\stmod kG)$ and its
associated space. When $Z(\stmod kG)$ equals the lattice of thick tensor
ideals, then the corresponding space is given by the projective
variety of the group cohomology ring $H^*(G,k)$, but the general case
requires a bigger space and is not clear. For instance, are all
central subcategories given by the central action of a graded
commutative ring?

\section{The relative perspective}\label{se:relative}

Let $\T$ be an essentially small triangulated category. The examples
suggest to get back to our Definition~\ref{de:centre}, introducing
the following relative version of the lattice of central thick
subcategories.

\begin{defn}\label{de:centre-relative}
  Let $T\subseteq\Thick\T$ be a sublattice which is closed under
  finite meets and arbitrary joins, containing in particular $0$ and
  $\T$. Then $\U\in T$ is called \emph{central relative to $T$} if
  $\U$ commutes with all $\V$ in $T$. An equivalent condition is that
  a morphism between objects of $\U$ and any $\V\in T$ (in either
  direction) factors through an object of $\U\wedge\V$.  We set
\[Z(T):=\{\U\in T\mid \U\text{ is central relative to } T\}.\]
\end{defn}

\begin{thm}\label{th:main}
  Let $T\subseteq\Thick\T$ be a sublattice which is closed under
  arbitrary joins. Then its centre $Z(T)\subseteq T$ is also a
  sublattice and closed under arbitrary joins. Moreover, $Z(T)$ is a
  spatial frame and for any pair $\U,\V$ in $T$ the Mayer--Vietoris
  sequence $\lambda_{\U,\V}$ exists provided at least one of $\U$ and $\V$
  is central.
\end{thm}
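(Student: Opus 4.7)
The plan is to mirror the arguments of Propositions~\ref{pr:finite-sup} and \ref{pr:frame} together with Corollary~\ref{co:spatial}, adapting them to the relative setting where the ambient lattice is $T$ rather than $\Thick\T$. The Mayer--Vietoris statement at the end is immediate from Proposition~\ref{pr:MV}: by definition of central, any $\U \in Z(T)$ commutes with every $\V \in T$.

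First I would establish closure of $Z(T)$ under finite meets by the direct chain of isomorphisms
\[\Ga_{\U_1 \wedge \U_2}\Ga_\W \cong \Ga_{\U_1}\Ga_{\U_2}\Ga_\W \cong \Ga_{\U_1}\Ga_\W\Ga_{\U_2} \cong \Ga_\W\Ga_{\U_1}\Ga_{\U_2} \cong \Ga_\W\Ga_{\U_1 \wedge \U_2}\]
for $\U_1, \U_2 \in Z(T)$ and $\W \in T$, using Lemma~\ref{le:U-V-central}(2) together with centrality of $\U_1$ and $\U_2$.

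Next I would establish closure under finite joins and the finite distributivity identity $(\U_1\wedge\W)\vee(\U_2\wedge\W)=(\U_1\vee \U_2)\wedge\W$ simultaneously by imitating the three-row diagram from the proof of Proposition~\ref{pr:finite-sup}: the post-composition of $\g_{\U_1,\U_2}$ with $\Ga_\W$, the sequence $\g_{\U_1\wedge\W,\U_2\wedge\W}$, and the pre-composition of $\g_{\U_1,\U_2}$ with $\Ga_\W$. The crucial technical input in the relative setting is the commutativity of $\U_1\wedge\W$ and $\U_2\wedge\W$ needed for the middle Mayer--Vietoris row, since Lemma~\ref{le:central-res} is no longer directly available. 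I would argue this by a two-step factoring: a morphism between $x\in \U_1\wedge\W$ and $y\in \U_2\wedge\W$ first factors through some $z\in \U_1\wedge \U_2$ (using that $\U_1$ is central and $\U_2\in T$), and then the component $x\to z$ factors through $\W\wedge(\U_1\wedge \U_2)$ because $\U_1\wedge \U_2\in Z(T)$ by the previous step and $\W\in T$. The five lemma applied to the three-row diagram then yields both $\Ga_{\U_1\vee \U_2}\Ga_\W\cong\Ga_\W\Ga_{\U_1\vee \U_2}$ (so $\U_1\vee \U_2\in Z(T)$) and, via Lemma~\ref{le:fin-dist}, the finite distributivity.

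Closure under arbitrary joins reduces to the finite case: any object of $\bigvee_i\U_i$ lies in some finite subjoin $\bigvee_{i\in J}\U_i$, which is in $Z(T)$ by the previous step, so any morphism between such an object and an object of $\W\in T$ factors through $(\bigvee_{i\in J}\U_i)\wedge\W\subseteq(\bigvee_i\U_i)\wedge\W$. Infinite distributivity then follows from the finite case as in Proposition~\ref{pr:frame}, and the spatiality of the resulting frame is obtained from Zorn's lemma exactly as in Corollary~\ref{co:spatial}. I expect the main obstacle to be the commutativity of $\U_1\wedge\W$ and $\U_2\wedge\W$ in the finite-join step, since $\W$ need not itself be central and the analogue of Lemma~\ref{le:central-res} in the relative setting must be extracted by hand; establishing meet-closure of $Z(T)$ first is precisely what makes the two-step factoring work.
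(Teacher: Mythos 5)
Your proposal is correct and takes essentially the same route as the paper, whose proof simply notes that the arguments of Propositions~\ref{pr:finite-sup} and \ref{pr:frame} and Corollary~\ref{co:spatial} carry over to the relative setting and invokes Corollary~\ref{co:MV} for the Mayer--Vietoris part. The one point you rightly flag --- commutativity of the pair $\U_1\wedge\W$, $\U_2\wedge\W$, where Lemma~\ref{le:central-res} is no longer available --- is handled correctly by your two-step factoring, though it in fact follows in one step: since $T$ is closed under finite meets we have $\U_2\wedge\W\in T$, so centrality of $\U_1$ relative to $T$ already factors any morphism between objects of $\U_1\wedge\W$ and $\U_2\wedge\W$ through an object of $\U_1\wedge(\U_2\wedge\W)=(\U_1\wedge\W)\wedge(\U_2\wedge\W)$.
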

\begin{proof}
  We have already seen the proof for $T=\Thick\T$. The arguments given
  in Propositions~\ref{pr:finite-sup} and \ref{pr:frame} as well as in
  Corollary~\ref{co:spatial} carry over without change. For the
  Mayer--Vietoris sequence, see Corollary~\ref{co:MV}.
\end{proof}

A consequence is that $T$ is a distributive lattice when  $Z(T)=T$. The converse
is not necessarily true; cf.\ Example~\ref{ex:A2}.

\begin{exm}\label{ex:rigid-tt}
  Let $\T$ be a rigid tensor triangulated category (not necessarily
  commutative) and $T\subseteq\Thick\T$ the sublattice of all thick
  tensor ideals. Then $Z(T)=T$ by
  Proposition~\ref{pr:tensor-central}. In particular, the space
  associated with this frame
    \[\Spc_\otimes\T:=\Pt T\] coincides (up to Hochster duality) with
   Balmer's space $\Spc\T$ \cite{Ba2005,KP2017} and its
   non-commutative analogue \cite{GS2022,NVY2019}.
\end{exm}

\begin{exm}
  Consider the graded centre $Z^*(\T)$ of $\T$ and set
  \[T:=\{\T_V\mid V\subseteq\Spec Z^*(\T) \text{ is Thomason}\}.\] Then
  $T\subseteq Z(\Thick\T)$ is a frame by Proposition~\ref{pr:Thomason}
  and in particular $Z(T)=T$.  One may call the associated space
  \[\Spc_{\coh}\T:=\Pt T\]
  the \emph{cohomological spectrum} of $\T$; it is universal in the
  sense that any central action of a graded commutative ring $R$ via a
  homomorphism $R\to Z^*(\T)$ induces a continuous map
  $\Spc_{\coh}\T\to\Spec^{\vee} R$. In particular, the open subsets of
  $\Spc_{\coh}\T$ parameterise the thick subcategories of $\T$ that
  are given by the central action of any graded commutative ring.
\end{exm}

For a rigid tensor triangulated category $\T$ there is an interesting
relation between $\Spc_\otimes\T$ and $\Spc_{\coh}\T$; for a discussion of
this we refer to \cite{Ba2010}.

For any choice of $T\subseteq\Thick\T$ there is an obvious notion of
\emph{central support relative to $T$} for objects in $\T$, which is
the analogue of the central support defined via \eqref{eq:supp} and
parameterises the elements of $Z(T)$; cf.\
Corollary~\ref{co:supp-bijection}. In particular, one may formulate a
universal property of central support which takes $T$ as input. The
details are left to the interested reader.


\begin{thebibliography}{10}

\bibitem{Ba2005} P. Balmer, The spectrum of prime ideals in tensor
  triangulated categories, J. Reine Angew. Math. {\bf 588} (2005),
  149--168.

\bibitem{Ba2010} P. Balmer, Spectra, spectra, spectra---tensor
  triangular spectra versus Zariski spectra of endomorphism rings,
  Algebr. Geom. Topol. {\bf 10} (2010), no.~3, 1521--1563.

\bibitem{Ba2011} P. Balmer, Tensor triangular geometry, in {\it
    Proceedings of the International Congress of
    Mathematicians. Volume II}, 85--112, Hindustan Book Agency, New
  Delhi, 2010.
  
\bibitem{BF2007} P. Balmer\ and\ G. Favi, Gluing techniques in
  triangular geometry, Q. J. Math. {\bf 58} (2007), no.~4, 415--441.

\bibitem{Be1994} D. J. Benson, Cohomology of modules in the principal
  block of a finite group, New York J. Math. {\bf 1} (1994/95),
  196--205.

\bibitem{BCR1997} D. J. Benson, J. F. Carlson\ and\ J. Rickard, Thick
  subcategories of the stable module category, Fund. Math. {\bf 153}
  (1997), no.~1, 59--80.
  
\bibitem{BIK2008}  D. Benson, S. B. Iyengar\ and\ H. Krause, Local
  cohomology and support for triangulated categories,
  Ann. Sci. \'{E}c. Norm. Sup\'{e}r. (4) {\bf 41} (2008), no.~4,
  573--619.
  
\bibitem{BIK2015} D. Benson, S. B. Iyengar\ and\ H. Krause, A
  local-global principle for small triangulated categories,
  Math. Proc. Cambridge Philos. Soc. {\bf 158} (2015), no.~3,
  451--476.

\bibitem{BKS2007} A.B. Buan, H. Krause, and \O{}. Solberg, Support
  varieties–an ideal approach, Homology, Homotopy Appl., 9 (2007),
  45–74.

\bibitem{BKSS2020} A.~B. Buan,  H. Krause, N. Snashall, and \O{}. Solberg, Support varieties---an
  axiomatic approach, Math. Z. {\bf 295} (2020), no.~1-2, 395--426.
  
\bibitem{BF2008} R.-O. Buchweitz\ and\ H. Flenner, Global Hochschild
  (co-)homology of singular spaces, Adv. Math. {\bf 217} (2008),
  no.~1, 205--242.
  
\bibitem{GS2022} S. Gratz and G. Stevenson, Approximating triangulated
  categories by spaces, arXiv:2205.13356.

\bibitem{Grothendieck/Verdier:1972a} A. Grothendieck and
  J. L. Verdier, Pr\'efaisceaux, in {\it SGA 4, Th\'eorie des Topos et
    Cohomologie Etale des Sch\'emas, Tome 1. Th\'eorie des Topos},
  Lect. Notes in Math., vol. 269, Springer, Heidelberg, 1972-1973,
  pp. 1-184.

\bibitem{Ho1969} M. Hochster, Prime ideal structure in commutative
  rings, Trans. Amer. Math. Soc. {\bf 142} (1969), 43--60.
  
\bibitem{HK2016} A. Hubery\ and\ H. Krause, A categorification of
  non-crossing partitions, J. Eur. Math. Soc. (JEMS) {\bf 18} (2016),
  no.~10, 2273--2313.
  
\bibitem{IT2009} C. Ingalls\ and\ H. Thomas, Noncrossing partitions
  and representations of quivers, Compos. Math. {\bf 145} (2009),
  no.~6, 1533--1562.

\bibitem{KP2017} J. Kock\ and\ W. Pitsch, Hochster duality in derived
  categories and point-free reconstruction of schemes,
  Trans. Amer. Math. Soc. {\bf 369} (2017), no.~1, 223--261.

\bibitem{KS2019} H. Krause\ and\ G. Stevenson, The derived category of
  the projective line, in {\it Spectral structures and topological
    methods in mathematics}, 275--297, EMS Ser. Congr. Rep, EMS
  Publ. House, Z\"{u}rich, 2019.

\bibitem{NVY2019} D. K. Nakano, K. B. Vashaw\ and\ M. T. Yakimov,
  Noncommutative tensor triangular geometry, Amer. J. Math. {\bf 144}
  (2022), no.~6, 1681--1724.
  
\bibitem{Neeman:1992a} A. Neeman, The Brown representability theorem
  and phantomless triangulated categories, J. Algebra {\bf 151}
  (1992), no.~1, 118--155.

\bibitem{Ri1997} J. Rickard, Idempotent modules in the stable
  category, J. London Math. Soc. (2) {\bf 56} (1997), no.~1, 149--170.

\bibitem{Ri1999} C. M. Ringel, Algebra at the turn of the century,
  Lecture at the National Conference on Algebra VII, Beijing Normal
  University, October 1999, Southeast Asian Bull. Math. {\bf 25}
  (2001), no.~1, 147--160.
  
\bibitem{Th1997} R. W. Thomason, The classification of triangulated
  subcategories, Compositio Math. {\bf 105} (1997), no.~1, 1--27.

\bibitem{TD2008} T. tom Dieck, {\it Algebraic topology}, EMS Textbooks
  in Mathematics, European Mathematical Society (EMS), Z\"{u}rich,
  2008.

\bibitem{Va2023}  K. B. Vashaw, Private communication.

\bibitem{Ve1997} J.-L. Verdier, {\it Des cat\'egories d\'eriv\'ees des
    cat\'egories ab\'eliennes}, Ast\'erisque, 239, Soci\'et\'e
  Math\'ematique de France, 1996.

\end{thebibliography}
\end{document}